\documentclass[11pt]{amsart}

\usepackage{amsfonts}
\usepackage{amssymb}
\usepackage{amsmath}
\usepackage{amsthm}
\usepackage{stmaryrd} 
\usepackage{wasysym} 
\usepackage[colorlinks,allcolors=magenta]{hyperref}
\usepackage{tikz}

\usepackage[all]{xy}
\usepackage{enumitem}

\setenumerate{topsep=0pt,labelwidth=0pt,align=left,labelindent=0pt,labelsep=0in,labelwidth=0.275in,leftmargin=0.275in,label=\rm(\alph*),parsep=0pt,itemsep=1pt}

\renewcommand{\phi}{\varphi}

\renewcommand{\hat}{\widehat}

\renewcommand{\bar}{\overline}
\newcommand{\restr}{\upharpoonright}
\newcommand{\forces}{\mathrel{\Vdash}}

\newcommand{\iso}{\cong}

\newcommand{\Z}{\mathbb{Z}}
\newcommand{\N}{\mathbb{N}}
\newcommand{\Q}{\mathbb{Q}}
\newcommand{\R}{\mathbb{R}}
\newcommand{\C}{\mathbb{C}}

\newcommand{\B}{\mathbb{B}}
\newcommand{\A}{\mathbb{A}}

\renewcommand{\P}{\mathbb{P}}

\newcommand{\LF}{\mathcal{F}}
\newcommand{\LP}{\mathcal{P}}
\newcommand{\LC}{\mathcal{C}}

\newcommand{\LD}{\mathcal{D}}
\newcommand{\LG}{\mathcal{G}}
\newcommand{\LR}{\mathcal{R}}

\newtheorem{thm}{Theorem}[section]

\newtheorem{lemma}[thm]{Lemma}
\newtheorem{cor}[thm]{Corollary}
\newtheorem{ques}[thm]{Question}

\newtheorem*{marks_conj}{Marks's uniformity conjecture}

\theoremstyle{definition}
\newtheorem{defn}[thm]{Definition}
\newtheorem{example}[thm]{Example}

\theoremstyle{remark}

\newcommand{\ZFC}{\mathsf{ZFC}}

\renewcommand{\P}{\mathbb{P}}

\newcommand{\V}{\mathbf{V}}
\renewcommand{\L}{\mathbf{L}}

\newcommand{\Gen}{\mathrm{Gen}}

\newcommand{\Aut}{\mathrm{Aut}}

\newcommand{\SL}{\mathrm{SL}}

\title{Equivalence of generics}
\date{\today}
\author{Iian B. Smythe}
\address{Department of Mathematics, Rutgers, The State University of New Jersey, 110 Frelinghuysen Road, Piscataway, NJ, 08854}
\email{i.smythe@rutgers.edu}

\subjclass[2010]{Primary 03E15, 03E40; Secondary 37A20}
\keywords{Borel equivalence relations, forcing, orbit equivalence relations}
%

\begin{document}

\begin{abstract}
Given a countable transitive model of set theory and a partial order contained in it, there is a natural countable Borel equivalence relation on generic filters over the model; two are equivalent if they yield the same generic extension. We examine the complexity of this equivalence relation for various partial orders, with particular focus on Cohen and random forcing. We prove, amongst other results, that the former is an increasing union of countably many hyperfinite Borel equivalence relations, while the latter is neither amenable nor treeable.
\end{abstract}

\maketitle

\section{Introduction}

Given a countable transitive model $M$ of $\ZFC$ and a partial order $\P$ in $M$, we can construct $M$-generic filters $G\subseteq\P$ and their corresponding generic extensions $M[G]$ using the method of forcing. We say that two such $M$-generic filters $G$ and $H$ are \emph{equivalent}, written $G\equiv^\P_M H$, if they produce the same generic extension, that is:
\[
	G\equiv^\P_M H \quad\text{if and only if}\quad M[G]=M[H].
\]
It is this equivalence relation that we aim to study.

The countability of $M$ and the definability of the forcing relation imply that $\equiv^\P_M$ is a \emph{countable Borel equivalence relation} (Lemma \ref{lem:equiv_CBER}), that is, each equivalence class is countable and $\equiv^\P_M$ is a Borel set of pairs in some appropriately defined space of $M$-generic filters for $\P$. The general theory of countable Borel equivalence relations affords us a broad set of tools for analyzing the relative complexity of each $\equiv^\P_M$; see the surveys \cite{MR1900547} and \cite{ThomasAST}. In turn, each $\equiv^\P_M$ provides a natural, well-motivated example.

To briefly review the general theory, for Borel equivalence relations $E$ and $F$ on Polish, or standard Borel, spaces $X$ and $Y$, a \emph{Borel reduction} of $E$ to $F$ is a Borel measurable function $f:X\to Y$ satisfying
\[
	x\,E\,y \quad\text{if and only if}\quad f(x)Ef(y)
\]
for all $x,y\in X$. If such an $f$ exists, we say that $E$ is \emph{Borel reducible} to $F$, written $E\leq_B F$. The relation $\leq_B$ gives a measure of complexity amongst Borel equivalence relations. If $E\leq_B F$ and $F\leq_B E$, then we say that $E$ and $F$ are \emph{Borel bireducible}; they have the same level of complexity. If $f$ only satisfies the forward implication in the displayed line above, we say that $f$ is a \emph{Borel homomorphism} of $E$ to $F$. A bijective Borel reduction from $E$ to $F$ is called a \emph{Borel isomorphism}, in which case we say that $E$ and $F$ are \emph{Borel isomorphic} and write $E\iso_B F$.
 
The simplest Borel equivalence relations, called \emph{smooth}, are those Borel reducible to the equality relation $\Delta(\R)$ on the reals. A benchmark example of a non-smooth countable Borel equivalence relation is eventual equality of binary strings, denoted by $E_0$:
\[
	x\,E_0\,y\quad\text{if and only if}\quad \exists m\forall n\geq m(x(n)=y(n)),
\]
for $x,y\in 2^\omega$.

Amongst the countable Borel equivalence relations, those Borel reducible to $E_0$ are exactly those which are \emph{hyperfinite}, that is, equal to an increasing union of countably many Borel equivalence relations, each with finite classes. These also coincide with orbit equivalence relations of Borel actions of $\Z$ (see Theorem 5.1 in \cite{MR1149121}).

Every hyperfinite equivalence relation is \emph{(Fr\'{e}chet) amenable}, see \cite{MR1900547} for the (somewhat technical) definition. In fact, every orbit equivalence relation induced by a countable amenable group is amenable (Proposition 2.13 in \cite{MR1900547}), while it remains open whether the converse holds, and whether amenability and hyperfiniteness coincide for equivalence relations.

More generally, every countable Borel equivalence relation can be realized as the orbit equivalence relation of a Borel action of some countable group (Theorem 1 in \cite{MR0578656}). Consequently, much of the theory consists of analyzing the dynamics of group actions. Of particular importance are the \emph{Bernoulli shift actions}: Given a countably infinite group $\Gamma$, $\Gamma$ acts on the space $2^\Gamma$ by:
\[
	(\gamma\cdot x)(\delta)=x(\gamma^{-1}\delta)
\]
for $x\in 2^\Gamma$, $\gamma,\delta\in\Gamma$. The corresponding orbit equivalence is denoted by $E(\Gamma,2)$. The \emph{free part} of this action,
\[
	(2)^\Gamma=\{x\in 2^\Gamma:\forall \gamma\in\Gamma(\gamma\neq 1\Rightarrow \gamma\cdot x\neq x)\},
\]
is a $\Gamma$-invariant Borel set on which the action is free, and is easily seen to be conull with respect to the usual product measure on $2^\Gamma$. Denote by $F(\Gamma,2)$ the restriction of $E(\Gamma,2)$ to $(2)^\Gamma$.

When $\Gamma=F_2$, the free group on $2$ generators, $E(F_2,2)$ and $F(F_2,2)$ are not amenable, and thus not hyperfinite (cf.~Proposition 1.7 in \cite{MR1900547} and \cite{MR960895}). $E(F_2,2)$ is \emph{universal}, every countable Borel equivalence relation is Borel reducible to it (Proposition 1.8 in \cite{MR1149121}). $F(F_2,2)$ is \emph{treeable}, meaning there is a Borel acyclic graph on the underlying space whose connected components are exactly the equivalence classes of $F(F_2,2)$. Every hyperfinite Borel equivalence relation is treeable, while no universal one is (see \cite{MR1900547}).
%

One precursor to the present work is the recent paper \cite{ClemCoskDwor} on the classification of countable models of $\ZFC$ up to isomorphism. While much of \cite{ClemCoskDwor} is concerned with ill-founded models, the proof of Theorem 3.2 therein, that $E_0$ Borel reduces to the isomorphism relation for countable well-founded models, makes essential use of the the fact that $\equiv^\C_M$, for $\C$ Cohen forcing, is not smooth. This observation was a starting point for our work here.

This paper is arranged as follows: Section \ref{sec:gen_results} consists of general results which apply to an arbitrary partial order $\P$ in $M$. We describe spaces of $M$-generic objects for $\P$, prove Borelness of $\equiv^\P_M$ on these spaces, verify that the Borel complexity of $\equiv^\P_M$ is independent of different presentations of $\P$, and discuss automorphisms of $\P$ and their relationship to $\equiv^\P_M$. We show that, for many of the partial orders one encounters in forcing, $\equiv^\P_M$ is not smooth (Theorem \ref{thm:hom_nonsmooth}).

Section \ref{sec:Cohen} is devoted to Cohen forcing $\C$. We prove that $\equiv^\C_M$ is an increasing union of countably many hyperfinite equivalence relations and is thus amenable (Theorem \ref{thm:cohen_hhf}).

In Section \ref{sec:random}, we consider random forcing $\B$. For groups $\Gamma\in M$, we establish a connection between $F(\Gamma,2)$ and $\equiv^\B_M$ (Theorem \ref{thm:randoms_shift}), and use this to show that $\equiv^\B_M$ is not amenable (Theorem \ref{thm:random_not_amen}), not treeable (Theorem \ref{thm:randoms_not_treeable}), and in particular, not hyperfinite. We also produce partial results concerning whether $\equiv^\B_M$ can be induced by a free action (Theorem \ref{thm:randoms_not_Mfree}) and whether it is universal (Theorem \ref{thm:randoms_not_univ}).

Section \ref{sec:questions} concludes the paper with a series of further questions which we hope will motivate continued study of the equivalence relations $\equiv^\P_M$.

\subsection*{Acknowledgements} I would like to thank Samuel Coskey, Joel David Hamkins, Andrew Marks, and Simon Thomas for many helpful conversations and correspondences.

\section{General results}\label{sec:gen_results}

Fix throughout a countable transitive model $M$ of $\ZFC$.\footnote{To avoid metamathematical concerns, one may, as always, work with a model of a large enough finite fragment of $\ZFC$. Nor do we really need $M$ to be countable; for a given partial order $\P$, it suffices that $\LP(\P)\cap M$ is countable (in $\V$). In particular, we could allow $M$ to be a transitive class in $\V$, such as when $\V$ is a generic extension of $M$ after sufficient collapsing, or $M=\L$ under large cardinal hypotheses. In such cases, only the proofs of Lemma \ref{lem:equiv_CBER} and \ref{lem:idealized_equiv_CBER} need alteration, instead relying on Theorem \ref{thm:bool_auts}.} 
When we assert that ``$\P$ is a partial order in $M$'', or just ``$\P\in M$'', we mean that $\P$ is a set partially ordered by $\leq$, and both $\P$ and $\leq$ are elements of $M$. $\P$ will always be assumed infinite, and thus, countably infinite in $\V$. These conventions also apply to Boolean algebras in $M$. We say that a Boolean algebra $\A$ is ``complete in $M$'' if $M\models\text{``$\A$ is a complete Boolean algebra''}$.

\subsection{Spaces of generics}

Recall that a filter $G\subseteq\P$ is \emph{$M$-generic} if it intersects each dense subset of $\P$ which is contained in $M$. Since $M$ is countable, such filters always exist (Lemma VII.2.3 in \cite{MR597342}).

\begin{defn}
	For a partial order $\P$ in $M$, the \emph{space of $M$-generics for $\P$}, denoted by $\Gen^\P_M$, is
	\[
		\Gen^\P_M = \{G\subseteq\P:G \text{ is an $M$-generic filter}\},
	\]
	We identify $\Gen^\P_M$ as a subspace of $2^\P$ with the product topology.		
\end{defn}

\begin{lemma}
	$\Gen^\P_M$ is a $G_\delta$ subset of $2^\P$, and thus a Polish space.
\end{lemma}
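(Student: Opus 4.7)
The plan is to exhibit $\Gen^\P_M$ as a countable intersection of open subsets of $2^\P$, after which Alexandrov's theorem gives Polishness for free. The space $2^\P$ is a Polish (in fact, Cantor) space since $\P$ is countable in $\V$, so the subbasic clopen sets $[p]^+ = \{G \subseteq \P : p \in G\}$ and $[p]^- = \{G \subseteq \P : p \notin G\}$ are available for each $p \in \P$.

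First I would unpack the defining conditions of ``$M$-generic filter'' into four clauses: (i) $G$ is nonempty, (ii) $G$ is upward closed, (iii) $G$ is downward directed, and (iv) for every dense $D \subseteq \P$ with $D \in M$, $G \cap D \neq \emptyset$. Then I would check each clause defines an open or $G_\delta$ subset of $2^\P$. Clause (i) is the open set $\bigcup_{p \in \P} [p]^+$. Clause (ii) is the closed (in fact $G_\delta$) set
\[
	\bigcap_{p \leq q} \bigl( [p]^- \cup [q]^+ \bigr).
\]
Clause (iii), which says that whenever $p, q \in G$ there is some $r \in G$ with $r \leq p$ and $r \leq q$, is the $G_\delta$ set
\[
	\bigcap_{p, q \in \P} \Bigl( [p]^- \cup [q]^- \cup \bigcup_{r \leq p,\, r \leq q} [r]^+ \Bigr),
\]
since the inner union is open. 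For clause (iv), the crucial input is that $M$ is countable in $\V$, so there are only countably many dense $D \in M$; for each such $D$, the set $\{G : G \cap D \neq \emptyset\} = \bigcup_{p \in D} [p]^+$ is open, and intersecting over all such $D$ yields a $G_\delta$ set.

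Intersecting the four $G_\delta$ (or open) conditions gives that $\Gen^\P_M$ is $G_\delta$ in the Polish space $2^\P$, and hence Polish in its relative topology by Alexandrov's theorem. There is no real obstacle; the only point that requires any care is the appeal to countability of $M$ in $\V$ for clause (iv), which is exactly the hypothesis standing throughout Section \ref{sec:gen_results}. (The footnote's alternative hypothesis $|\LP(\P) \cap M| = \aleph_0$ suffices equally well, since it still bounds the number of dense subsets of $\P$ in $M$.)
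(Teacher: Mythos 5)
Your argument is correct and follows essentially the same route as the paper: express the filter conditions (upward closure, directedness) and each dense-set-meeting condition as $G_\delta$ or open sets built from the subbasic clopen sets of $2^\P$, using the countability of $M$ to get only countably many dense sets, then invoke Alexandrov. The only cosmetic difference is your explicit nonemptiness clause, which the paper omits since it is implied by meeting any dense set.
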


\begin{proof}
	Note that for any $p\in\P$, the sets $\{F\subseteq\P:p\in F\}$ and $\{F\subseteq\P:p\notin F\}$ are clopen. Enumerate the dense subsets of $\P$ in $M$ as $\{D_n:n\in\omega\}$. Then,
	\[
		\Gen^\P_M = \{F\subseteq\P:F\text{ is a filter}\}\cap\bigcap_{n\in\omega}\{F\subseteq\P:F\cap D_n\neq\emptyset\}.
	\]
	For each $n\in\N$,
	\[
		\{F\subseteq\P:F\cap D_n\neq\emptyset\}=\bigcup_{p\in D_n}\{F\subseteq\P:p\in D\},
	\]
	and these sets are open, so $\bigcap_{n\in\omega}\{F\subseteq\P:F\cap D_n\neq\emptyset\}$ is $G_\delta$.
	
	$F\subseteq\P$ is a filter if and only if $F\in\LF_1\cap\LF_2$, where
	\begin{align*}
		\LF_1&=\{F\subseteq\P:\forall p,q\in\P((p\in F\land p\leq q) \rightarrow q\in F)\},\\
		\LF_2&=\{F:\forall p,q((p,q\in F)\rightarrow\exists r(r\in F\land r\leq p,q))\}.
	\end{align*}
	Observe that
	\[
		\LF_1=\bigcap_{p\leq q\in\P}\left(\{F\subseteq\P:p\notin F\}\cup\{F\subseteq\P:q\in F\}\right),
	\]
	which is $G_\delta$, while
	\[
		\LF_2=\bigcap_{p,q\in\P}\left(\{F\subseteq\P:p\notin F\}\cup\{F\subseteq\P:q\notin F\}\cup\bigcup_{r\leq p,q}\{F\subseteq\P:r\in F\}\right)
	\]
	which is also $G_\delta$. Thus, $\Gen^\P_M$ is $G_\delta$.
\end{proof}

The following will simplify arguments involving the topology of $\Gen^\P_M$.

\begin{lemma}\label{lem:basic_clopens}
	The topology on $\Gen^\P_M$ has a basis consisting of clopen sets of the form
	\[
		N_p=\{G\in\Gen^\P_M:p\in G\}
	\]	
	for $p\in\P$.
\end{lemma}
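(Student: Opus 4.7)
The plan is to combine the standard subbasis for the product topology on $2^\P$ with a density argument exploiting $M$-genericity. Since $N_p$ and its complement $\{G:p\notin G\}$ are already clopen in $2^\P$, each $N_p$ is clopen in $\Gen^\P_M$, and a subbasis of the subspace topology consists of these two families. It therefore suffices, given any $G\in\Gen^\P_M$ lying in a basic open set
\[
	U = \{H\in\Gen^\P_M : F_1\subseteq H,\ F_0\cap H = \emptyset\}
\]
for finite disjoint $F_0,F_1\subseteq\P$, to produce a single $r\in G$ with $N_r\subseteq U$.

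The heart of the argument is to convert the negative conditions ``$p\notin H$'' into positive ones. For each $p\in F_0$, the set
\[
	D_p = \{q\in\P : q\leq p \text{ or } q \text{ is incompatible with } p\}
\]
belongs to $M$ and is readily seen to be dense in $\P$; by $M$-genericity of $G$, there is some $q_p\in G\cap D_p$, and the alternative $q_p\leq p$ would, by upward closure of $G$, imply $p\in G$, contradicting $p\in F_0$. Hence $q_p$ is incompatible with $p$. Using the filter property of $G$ finitely many times, I can choose $r\in G$ with $r\leq p$ for every $p\in F_1$ and $r\leq q_p$ for every $p\in F_0$; then $r$ is incompatible with each $p\in F_0$, since any common lower bound of $r$ and $p$ would also lie below $q_p$ and $p$.

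I then claim $N_r\subseteq U$: given $H\in N_r$, upward closure of $H$ gives $p\in H$ for every $p\in F_1$, and if some $p\in F_0$ lay in $H$, the filter property for $H$ would produce a common lower bound of $r$ and $p$, contradicting their incompatibility. I do not anticipate any serious obstacle; the only ingredient beyond standard topology is the familiar observation that $D_p$ is dense in $\P$, which converts a question about non-membership in a generic filter into a question about incompatibility with an element actually in the filter.
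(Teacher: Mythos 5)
Your proof is correct and follows essentially the same route as the paper's: both replace each negative constraint $p\notin H$ by a condition in the generic filter incompatible with $p$, and then take a common lower bound of the finitely many resulting positive conditions. The only difference is cosmetic — the paper invokes the maximality of $M$-generic filters, while you make explicit the dense set $D_p$ that proves this maximality.
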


\begin{proof}
	Since $\Gen^\P_M$ has the subspace topology it inherits from $2^\P$, the sets $N_p$ above are clopen. Suppose we are given a non-empty basic open set
	\[
		U=\{G\in \Gen^\P_M:p_0,\ldots,p_n\in G \text{ and }q_0,\ldots,q_m\notin G\},
	\]
	for $p_0,\ldots,p_n,q_0,\ldots,q_m\in\P$. Since $M$-generic filters for $\P$ are, in particular, maximal, we may find conditions $q_0',\ldots,q_m'\in\P$ so that
	\[
		U'=\{G\in \Gen^\P_M:p_0,\ldots,p_n,q_0',\ldots,q_m'\in G\}
	\]
	is non-empty and contained in $U$. Taking $p$ to be a common lower bound of $p_0,\ldots,p_n,q_0',\ldots,q_m'$, which exists since $U'\neq\emptyset$, we have $\emptyset\neq N_p\subseteq U'$.
\end{proof}

Recall that a partial order $\P$ is \emph{atomless} if for any $p\in\P$, there are $q,r\leq p$ with $q\perp r$ (i.e., they have no common lower bound). In all cases of interest, $\Gen^\P_M$ will be uncountable, a consequence of the following lemma.

\begin{lemma}\label{lem:atomless_perfect}
	$\Gen^\P_M$ has no isolated points if and only if $\P$ is atomless.
\end{lemma}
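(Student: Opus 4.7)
The strategy is to prove the two directions separately, using Lemma \ref{lem:basic_clopens} to reduce every topological statement to the basic clopens $N_p$.

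For the ``if'' direction, suppose $\P$ is atomless and fix $G \in \Gen^\P_M$; I need to exhibit, for each $p \in G$, an $M$-generic $H \in N_p$ with $H \neq G$. Atomlessness produces $q, r \leq p$ with $q \perp r$, so at least one of them---say $q$---lies outside $G$. Applying Rasiowa--Sikorski in $M$, pick any $M$-generic $H$ with $q \in H$: then $p \in H$, so $H \in N_p$, while $q \in H \setminus G$ forces $H \neq G$.

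For the ``only if'' direction I argue the contrapositive: if some $p \in \P$ is an atom---meaning every pair $q, r \leq p$ is compatible---then the unique $M$-generic containing $p$ is an isolated point of $\Gen^\P_M$. The core observation is that for each $r \leq p$, the set $D_r := \{s \leq p : s \leq r\}$ lies in $M$ and is dense below $p$: given $t \leq p$, the atom hypothesis yields $t \compat r$, and any common lower bound witnesses membership in $D_r$. Hence every $M$-generic $G$ with $p \in G$ contains every $r \leq p$, from which a short filter calculation gives
\[
G = \{q \in \P : q \compat p\},
\]
independently of the choice of $G$; existence of such a $G$ is again Rasiowa--Sikorski, so $N_p$ is a singleton.

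I expect the only subtle point to be the final reduction $G = \{q : q \compat p\}$: if $q \in G$, then $p, q \in G$ yields a common lower bound $s \in G$, so $q \compat p$; conversely any $s \leq p, q$ is below $p$, hence in $G$ by the density argument, pulling $q$ into $G$. Definability of $D_r$ in $M$ is immediate from $p, r, \P \in M$, and all genericity arguments use only dense sets below $p$, which is standard for filters containing $p$.
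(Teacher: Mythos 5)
Your proof is correct. The backward direction (atomless implies no isolated points) is essentially the paper's argument: both pass from a basic neighborhood $N_p$ of $G$ to incompatible $q,r\leq p$ and invoke existence of generics below them; your variant of choosing the one of $q,r$ that lies outside $G$ and taking a single generic through it is an immaterial streamlining. The forward direction is where you genuinely diverge. The paper argues directly: given that $\Gen^\P_M$ has no isolated points and $p\in\P$, it takes a generic $G\ni p$, a distinct $G'\in N_p$ with some $p'\in G'\setminus G$, and uses maximality of the generic filter $G$ to produce $q\in G$, $q\leq p$, with $q\perp p'$, while compatibility of $p,p'$ inside $G'$ yields $r\leq p,p'$, giving the incompatible pair below $p$. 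You instead prove the contrapositive: if $p$ is an atom, then for each $r\leq p$ the set $D_r=\{s\leq p: s\leq r\}\in M$ is dense below $p$, so any generic through $p$ absorbs all $r\leq p$ and is forced to equal $\{q: q\compat p\}$, making $N_p$ a singleton and hence an isolated point. Your route is slightly longer and leans on the standard fact that generics containing $p$ meet sets in $M$ that are dense below $p$ (which you correctly flag and which is routine to justify by padding $D_r$ with $\{q:q\perp p\}$), but it buys an explicit structural payoff the paper's proof does not state: over an atom there is exactly one $M$-generic filter, and it is $\{q:q\compat p\}$. The paper's argument is shorter, trading that description for the maximality property of generic filters.
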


\begin{proof}
	($\Leftarrow$) Suppose that $\P$ is atomless. Take $G\in\Gen^\P_M$. By Lemma \ref{lem:basic_clopens}, it suffices to consider a basic open set $N_p$ containing $G$, for $p\in\P$. Since $\P$ is atomless, there are $q,r\leq p$ with $q\perp r$. Take $G'$ and $G''$ to be $M$-generic filters containing $q$ and $r$, respectively. Then $G',G''\in N_p$, and at least one of them must be unequal to $G$, showing that $G$ is not isolated.
	
	($\Rightarrow$) Suppose that $\Gen^\P_M$ has no isolated points. Take $p\in\P$, and let $G$ be an $M$-generic filter containing $p$. Since $\Gen^\P_M$ has no isolated points, there is a filter $G'\in N_p$ distinct from $G$, say with $p'\in G'\setminus G$. As $G$ is maximal, there is a $q\in G$, which we may assume is $\leq p$, with $q\perp p'$. But $p\in G'$, so there is also a $r\leq p,p'$, and thus $q\perp r$.
\end{proof}

\begin{defn}
	For $\P$ a partial order in $M$, define $\equiv^\P_M$ on $\Gen^\P_M$ by
\[
	G \equiv^\P_M H \quad\text{if and only if}\quad M[G]=M[H].
\]
\end{defn}

By the minimality of generic extensions (Lemma VII.2.9 in \cite{MR597342}), $G \equiv^\P_M H$ if and only if $G\in M[H]$ and $H\in M[G]$.

\begin{lemma}\label{lem:equiv_CBER}
	$\equiv^\P_M$ is a countable Borel equivalence relation.
\end{lemma}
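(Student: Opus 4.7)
The plan is to verify the two defining properties of a countable Borel equivalence relation separately.

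For countability of the classes, I would argue as follows: if $G \equiv^\P_M H$, then $H \in M[H] = M[G]$. Since $M$ is countable in $\V$ and there are only countably many $\P$-names in $M$, the generic extension $M[G]$ is countable as well. Hence $[G]_{\equiv^\P_M} \subseteq M[G] \cap \Gen^\P_M$ is countable.

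For Borelness, I would start from the minimality characterization recalled just before the lemma: $G \equiv^\P_M H$ iff $H \in M[G]$ and $G \in M[H]$. Since $H \in M[G]$ iff $\sigma^G = H$ for some $\P$-name $\sigma \in M$ (and for this purpose we may restrict attention to names for subsets of $\check{\P}$, of which there are only countably many in $M$), it suffices to prove that for each such fixed $\sigma$ the relation $R_\sigma = \{(G,H) \in (\Gen^\P_M)^2 : \sigma^G = H\}$ is Borel; then
\[
    {\equiv^\P_M} \;=\; \bigcup_{\sigma, \tau \in M} R_\sigma \cap \{(G,H) : \tau^H = G\}
\]
is a countable union of Borel sets.

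To show $R_\sigma$ is Borel, I would invoke the truth and definability lemmas for the forcing relation. For each $p \in \P$,
\[
    p \in \sigma^G \iff M[G] \models \check{p} \in \sigma \iff \exists q \in G \, (q \forces \check{p} \in \sigma),
\]
and the set $D_{\sigma, p} = \{q \in \P : q \forces \check{p} \in \sigma\}$ is an element of $M$ independent of $G$. Thus $\{G : p \in \sigma^G\} = \bigcup_{q \in D_{\sigma, p}} N_q$ is open in $\Gen^\P_M$, and an analogous description holds for $\{G : p \notin \sigma^G\}$ using conditions forcing $\check{p} \notin \sigma$; since every generic meets the dense union of these two sets, both are in fact clopen. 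Combined with the clopen conditions "$p \in H$" from Lemma~\ref{lem:basic_clopens}, the statement $\sigma^G = H$ is the countable intersection over $p \in \P$ of the Borel biconditionals $(p \in H) \leftrightarrow (p \in \sigma^G)$, so $R_\sigma$ is Borel.

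The main obstacle, such as it is, is carefully applying the truth and definability lemmas of forcing to translate satisfaction in the generic extension $M[G]$ into a clopen condition on the topological variable $G$; once that translation is in hand, the remainder is routine bookkeeping enabled by the countability of $M$.
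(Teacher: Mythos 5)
Your proposal is correct and follows essentially the same route as the paper: countability of the classes from the countability of $\P$-names in $M$, and Borelness by expressing $G\in M[H]$ via the forcing relation $\exists q\in H\,(q\forces\check{p}\in\tau)$ for names $\tau\in M$. You simply spell out in topological detail (the sets $R_\sigma$ and the clopen conditions from the truth lemma) what the paper compresses into the remark that the forcing relation is arithmetic in a code for $M$, so the two arguments are the same in substance.
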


\begin{proof}
	For $G\in\Gen^\P_M$, the $\equiv^\P_M$-class of $G$ is a subset of $M[G]$, which is countable as there are only countably many $\P$-names in $M$. To see that $\equiv^\P_M$ is Borel, note that $G\in M[H]$ if and only if there is a $\P$-name $\tau\in M$ such that for every $p\in\P$,
	\[
		p\in G \quad\text{if and only if}\quad \exists q\in H(q\forces \check{p}\in\tau).
	\]
	Since the forcing relation is arithmetic in a code for the model $M$, and both $M$ and $\P$ are countable, this is a Borel condition on $G$ and $H$.
\end{proof}

The next lemma verifies that the Borel complexity of $\equiv^\P_M$ is invariant for forcing-equivalent presentations of $\P$.

\begin{lemma}\label{lem:equiv_posets}
	Suppose that $\P$ and $\Q$ are partial orders in $M$.
	\begin{enumerate}
		\item If $i:\P\to\Q$ is a dense embedding in $M$, then $\equiv^\P_M\;\iso_B\;\equiv^\Q_M$.
		\item If $\P$ and $\Q$ have isomorphic Boolean completions in $M$, then $\equiv^\P_M\;\iso_B\;\equiv^\Q_M$.
	\end{enumerate}
\end{lemma}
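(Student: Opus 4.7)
For part (a), the plan is to show that the assignment
\[
    \phi(G) = \{q \in \Q : \exists p \in G,\ i(p) \leq q\},
\]
the upward closure in $\Q$ of $i[G]$, is a Borel isomorphism $\Gen^\P_M \to \Gen^\Q_M$ that carries $\equiv^\P_M$ to $\equiv^\Q_M$, with inverse $\psi(H) = i^{-1}[H]$. First I would invoke the standard fact that $\phi$ and $\psi$ are mutually inverse bijections between $M$-generic filters on $\P$ and on $\Q$, with $M[G] = M[\phi(G)]$ for every such $G$ (Lemma VII.7.7 in \cite{MR597342}); the second point immediately delivers the preservation $G \equiv^\P_M G' \Leftrightarrow \phi(G) \equiv^\Q_M \phi(G')$. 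Borelness of $\phi$ is then a triviality: $q \in \phi(G)$ is the open condition $\exists p \in \P\, (p \in G \land i(p) \leq q)$, witnessed over the countable set $\P$; and $\psi$ is continuous by definition, being a restriction map $H \mapsto H \circ i$.

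For part (b), the plan is to bootstrap from (a) by routing through Boolean completions. Let $\B_\P, \B_\Q \in M$ be Boolean completions of $\P$ and $\Q$ that are isomorphic in $M$, and write $\B^+ = \B \setminus \{0\}$. In $M$ there are canonical dense embeddings $\P \hookrightarrow \B_\P^+$ and $\Q \hookrightarrow \B_\Q^+$, so (a) yields
\[
    \equiv^\P_M\;\iso_B\;\equiv^{\B_\P^+}_M \quad\text{and}\quad \equiv^\Q_M\;\iso_B\;\equiv^{\B_\Q^+}_M.
\]
Any isomorphism $\B_\P \cong \B_\Q$ in $M$ restricts to an isomorphism $\B_\P^+ \cong \B_\Q^+$ in $M$, which is in particular a dense embedding, so a third application of (a) splices the three Borel isomorphisms into $\equiv^\P_M \iso_B \equiv^\Q_M$.

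No step is expected to pose a genuine obstacle, since both parts assemble textbook facts about dense embeddings and Boolean completions. The only mild care required is in the Borelness check for $\phi$, which depends crucially on $\P$ being countable in $\V$ so that the existential quantifier defining $\phi(G)(q)$ ranges over a countable set, exactly as in the proof of Lemma \ref{lem:equiv_CBER}.
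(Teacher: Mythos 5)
Your proposal is correct and follows essentially the same route as the paper: both parts rest on the standard Kunen facts that a dense embedding induces mutually inverse, extension-preserving bijections between the generic filters, plus a countable-quantifier check of Borelness, and part (b) is obtained by composing the isomorphisms from (a) through the Boolean completions. The only cosmetic difference is that the paper works with the preimage map $H\mapsto i^{-1}(H)$ and checks its continuity on basic clopen sets, whereas you work with the upward-closure map in the other direction; this changes nothing of substance.
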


\begin{proof}
	(a) Define $\hat{i}:\Gen^\Q_M\to\Gen^\P_M$ by $\hat{i}(H)=i^{-1}(H)$. By standard results (Theorem VII.7.11 in \cite{MR597342}), $\hat{i}$ is a well-defined bijection with inverse
	\[
		\hat{i}^{-1}(G)=\{q\in\Q:\exists p\in G(i(p)\leq q)\},
	\]
	and satisfying $M[H]=M[\hat{i}(H)]$ for all $H\in\Gen^\Q_M$. Thus
	\[
		M[H_0]=M[H_1] \quad\text{if and only if}\quad M[\hat{i}(H_0)]=M[\hat{i}(H_1)]
	\]
	for all $H_0,H_1\in\Gen^\Q_M$, showing that $\hat{i}$ is a reduction. To see that $\hat{i}$ is Borel, given $N_p\subseteq\Gen^\P_M$, for $p\in\P$, a basic clopen as in Lemma \ref{lem:basic_clopens},
	\begin{align*}
		\hat{i}^{-1}(N_p)&=\{H\in\Gen^\Q_M:p\in\hat{i}(H)\}\\
		&=\{H\in\Gen^\Q_M:i(p)\in H\},
	\end{align*}
	which is clopen in $\Gen^\Q_M$. Thus, $\hat{i}$ is continuous, and in particular, Borel.
		
	(b) Follows immediately from (a) by composing Borel isomorphisms.
\end{proof}

Every Borel set in $M$ is \emph{coded} by a real $\alpha$ in $M$; we denote the interpretation of this code in a model $N\supseteq M$ by $B_\alpha^N$, a Borel set in $N$, omitting the superscript when $N=\V$. We will often just refer to a Borel set $B$ coded in $M$, without reference to the code itself. Likewise for Borel functions coded in $M$, identified with their Borel graphs. The basic properties of Borel codes can be found in \cite{MR1940513} or \cite{MR0265151}. 

Many partial orders whose generic extensions are generated by adjoining a single real can be presented as \emph{idealized forcings} \cite{MR2391923}, that is, as the set $P_I$ of all Borel subsets of $2^\omega$ not in $I$, ordered by inclusion, where $I$ is a non-trivial $\sigma$-ideal of Borel sets.

Given an $M$-generic filter $G$ for an idealized forcing $P_I$ in $M$, there is a unique (Proposition 2.1.2 in \cite{MR2391923}) real $x_G\in 2^\omega$ in $M[G]$ such that
\[
	\{x_G\}=\bigcap\{B^{M[G]}:B^M\in G\}=\bigcap\{B:B^M\in G\},
\]
called an \emph{$M$-generic real} for $P_I$. Since $G$ is computed from $x_G$ in any model $N\supseteq M$ containing it as
\[
	G=\{B^M\in P_I:x_G\in B^N\},
\] 
we have that $M[G]=M[x_G]$.

Given an idealized forcing $P_I$ in $M$, let $\LG^{P_I}_M$ be the set of all $M$-generic reals for $P_I$. Abusing notation, we define $\equiv^{P_I}_M$ in the obvious way on $\LG^{P_I}_M$.

\begin{lemma}\label{lem:idealized_Borel}
	$\LG^{P_I}_M$ is a Borel subset of $2^\omega$, and thus a standard Borel space.	
\end{lemma}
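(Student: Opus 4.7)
The strategy is to realize $\LG^{P_I}_M$ as the Borel preimage, under a Borel map, of the space $\Gen^{P_I}_M \subseteq 2^{P_I}$, whose Borelness (in fact, $G_\delta$-ness) has already been established. The excerpt exhibits a bijection between $\Gen^{P_I}_M$ and $\LG^{P_I}_M$: from a generic filter $G$ one recovers the real $x_G \in \bigcap\{B : B^M \in G\}$, and conversely the formula $G = \{B^M \in P_I : x_G \in B^\V\}$ recovers the filter from the real. My plan is to show that the map $x \mapsto G_x := \{B^M \in P_I : x \in B^\V\}$, viewed as a function $\phi : 2^\omega \to 2^{P_I}$, is Borel, and then characterize $\LG^{P_I}_M$ as $\phi^{-1}(\Gen^{P_I}_M)$.

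For Borel measurability, enumerate $P_I \cap M$ as $\{B_n : n \in \omega\}$ via Borel codes in $M$; this is possible since $M$ is countable in $\V$. The map $\phi$ is then determined by the countably many coordinate functions $\phi(x)(B_n) = 1 \iff x \in B_n^\V$, each of which is the characteristic function of a Borel subset of $2^\omega$. Hence $\phi$ is Borel (in fact, the preimage of each basic clopen of $2^{P_I}$ is a finite Boolean combination of Borel sets).

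It remains to verify $\LG^{P_I}_M = \phi^{-1}(\Gen^{P_I}_M)$. If $x \in \LG^{P_I}_M$, say $x = x_G$ for some $G \in \Gen^{P_I}_M$, then the identity $G = \{B^M \in P_I : x_G \in B^\V\}$ quoted in the excerpt gives $\phi(x) = G \in \Gen^{P_I}_M$. Conversely, suppose $G := \phi(x) \in \Gen^{P_I}_M$. By construction $x \in B^\V$ for every $B^M \in G$, so $x \in \bigcap\{B^\V : B^M \in G\} = \{x_G\}$ by the uniqueness of the $M$-generic real (Proposition 2.1.2 of \cite{MR2391923}), whence $x = x_G \in \LG^{P_I}_M$. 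Thus $\LG^{P_I}_M$ is Borel, and inherits the structure of a standard Borel space from $2^\omega$.

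I do not foresee any substantial obstacle. The essential inputs are the countability of $P_I \cap M$ in $\V$ and the absoluteness of Borel set membership between $M[G]$ and $\V$, which together make $\phi$ coordinate-wise Borel and allow genericity to be transported between filters and reals.
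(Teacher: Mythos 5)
Your argument is correct, but it takes a different route from the paper. The paper proves the lemma in one line by writing down the set of generic reals directly: $x\in\LG^{P_I}_M$ if and only if $x\in\bigcap\bigcup\{B:B^M\in\LD\}$, the intersection taken over all dense $\LD\in M$, and this is Borel because $M$ is countable; the only input is the standard characterization of $M$-generic reals for idealized forcing via dense sets (Zapletal). You instead factor through the filter space: you check that $\phi(x)=\{B^M\in P_I:x\in B\}$ is a Borel map $2^\omega\to 2^{P_I}$ (coordinatewise, since each condition's interpretation in $\V$ is Borel), and that $\LG^{P_I}_M=\phi^{-1}(\Gen^{P_I}_M)$, then quote the earlier lemma that $\Gen^{P_I}_M$ is $G_\delta$ in $2^{P_I}$. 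Your verification of the identity is sound: the forward inclusion is exactly the recovery formula $G=\{B^M\in P_I:x_G\in B\}$ already stated in the paper, and the backward inclusion follows from the uniqueness of $x_G$ (the intersection $\bigcap\{B:B^M\in G\}$ being a singleton), so you never need the dense-set characterization itself. What your approach buys is economy of hypotheses relative to the paper's setup (only facts already quoted there, plus reuse of the $G_\delta$ lemma); what the paper's approach buys is an explicit description of $\LG^{P_I}_M$ as a countable intersection of countable unions of interpreted $M$-Borel sets, with no auxiliary map. Both yield only ``Borel'' as the complexity bound, since the conditions $B$ are arbitrary Borel sets. The one implicit point in your write-up, shared with the paper, is that the interpretation $B^M\mapsto B$ is well defined independently of the chosen code, which follows from the absoluteness of equality of coded Borel sets.
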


\begin{proof}
	$x$ is an $M$-generic real for $P_I$ if and only if
	\[
		x\in\bigcap_{{\mathcal{D}\in M}\atop\text{$\mathcal{D}$ dense in $P_I$}}\bigcup\{B:B^M\in\LD\}.
	\]
	Since $M$ is a countable transitive model, the set on the right is Borel.
\end{proof}

Exactly as in Lemma \ref{lem:equiv_CBER}, we have:

\begin{lemma}\label{lem:idealized_equiv_CBER}
	$\equiv^{P_I}_M$ is a countable Borel equivalence relation.\qed
\end{lemma}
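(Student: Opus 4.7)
The plan is to transfer Lemma \ref{lem:equiv_CBER} across the natural Borel bijection between $\LG^{P_I}_M$ and $\Gen^{P_I}_M$. First I would verify that the map $\Phi:\LG^{P_I}_M\to\Gen^{P_I}_M$ defined by $\Phi(x)=\{B^M\in P_I : x\in B\}$ is Borel measurable: for each fixed $B^M\in P_I$, the set $\{x\in\LG^{P_I}_M : B^M\in\Phi(x)\}$ coincides with $\{x\in\LG^{P_I}_M : x\in B\}$, the trace on $\LG^{P_I}_M$ of the interpretation of the code $B^M$ in $\V$, and $P_I$ is countable in $\V$. The discussion preceding Lemma \ref{lem:idealized_Borel} already shows that $\Phi$ is a bijection onto $\Gen^{P_I}_M$ with inverse $G\mapsto x_G$ and that $M[\Phi(x)]=M[x]$, so
\[
x\equiv^{P_I}_M y \quad\Longleftrightarrow\quad \Phi(x)\equiv^{P_I}_M\Phi(y).
\]

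Consequently $\equiv^{P_I}_M$ on $\LG^{P_I}_M$ is the pullback under $\Phi\times\Phi$ of $\equiv^{P_I}_M$ on $\Gen^{P_I}_M$. The latter is a countable Borel equivalence relation by Lemma \ref{lem:equiv_CBER}; its preimage under the Borel map $\Phi\times\Phi$ is therefore Borel, and injectivity of $\Phi$ carries countability of classes across, delivering the lemma.

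Alternatively, one may replay the proof of Lemma \ref{lem:equiv_CBER} nearly verbatim in the real setting: each class is contained in the countable set $M[x]\cap 2^\omega$, and $y\in M[x]$ is equivalent to the existence of a $P_I$-name $\tau\in M$ for an element of $2^\omega$ with
\[
y(n)=1 \quad\Longleftrightarrow\quad \exists B^M\in P_I\,(x\in B\,\land\,B^M\forces\tau(\check n)=\check 1)
\]
for every $n\in\omega$, a Borel condition on $(x,y)$ given countability of $M$ and arithmeticity of the forcing relation in a code for $M$. I do not anticipate any serious obstacle; the only mild subtlety is expressing membership in $M[x]$ through forcing over $P_I$ rather than through the generic filter directly, and Lemma \ref{lem:idealized_Borel} together with the preceding discussion of $M$-generic reals already handles this.
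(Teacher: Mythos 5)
Your proposal is correct. The paper's own ``proof'' is simply the remark that the argument of Lemma \ref{lem:equiv_CBER} goes through verbatim, which is exactly your alternative: each class sits inside $M[x]\cap 2^\omega$, hence is countable, and ``$y\in M[x]$'' is rendered Borel via the existence of a $P_I$-name $\tau\in M$ whose evaluation by the filter $\{B^M\in P_I:x\in B\}$ is $y$, using countability of $M$ and definability of the forcing relation. Your primary route --- pulling the relation back along the Borel bijection $\Phi:x\mapsto\{B^M\in P_I:x\in B\}$ between $\LG^{P_I}_M$ and $\Gen^{P_I}_M$, with $M[\Phi(x)]=M[x]$ --- is a mild but genuine variant that the paper does not spell out here, though it is exactly the mechanism used in Lemma \ref{lem:idealized_equiv_posets}; it buys you a one-line reduction to Lemma \ref{lem:equiv_CBER} at the cost of checking Borelness of $\Phi$ (which you do correctly, coordinatewise on the countable set $P_I$) and noting that injectivity of $\Phi$ transfers countability of classes. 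Either way the lemma follows; there is no gap.
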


\begin{lemma}\label{lem:idealized_equiv_posets}
	Suppose that in $M$, $\P$ is a partial order, $I$ a $\sigma$-ideal of Borel subsets of $2^\omega$, and $i:P_I\to\P$ a dense embedding. Then, $\equiv^{P_I}_M\;\iso_B\;\equiv^\P_M$.
\end{lemma}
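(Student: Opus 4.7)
The plan is to construct the desired Borel isomorphism as a composition of two simpler ones. By Lemma \ref{lem:equiv_posets}(a) applied to the given dense embedding $i:P_I\to\P$ in $M$, we obtain a Borel isomorphism $\hat{i}:\Gen^\P_M\to\Gen^{P_I}_M$ witnessing that $\equiv^\P_M$ is Borel isomorphic to $\equiv^{P_I}_M$ viewed on the space of $M$-generic filters $\Gen^{P_I}_M$. It therefore suffices to show that this ``filter version'' of $\equiv^{P_I}_M$ is Borel isomorphic to the ``real version'' $\equiv^{P_I}_M$ on $\LG^{P_I}_M$ used in the statement.

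The natural candidate is $\Phi:\Gen^{P_I}_M\to\LG^{P_I}_M$ defined by $\Phi(G)=x_G$, using the formula $\{x_G\}=\bigcap\{B:B^M\in G\}$ recalled before Lemma \ref{lem:idealized_Borel}. From the same discussion, $\Phi$ is a bijection with inverse $\Phi^{-1}(x)=\{B^M\in P_I:x\in B\}$, and $M[G]=M[x_G]$. Consequently, for any $G_0,G_1\in\Gen^{P_I}_M$, $M[G_0]=M[G_1]$ if and only if $M[\Phi(G_0)]=M[\Phi(G_1)]$, so $\Phi$ is automatically a reduction of the two incarnations of $\equiv^{P_I}_M$ in both directions.

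The only remaining work is Borel measurability. For $\Phi$, given a basic clopen $[s]\subseteq 2^\omega$ with $s\in 2^{<\omega}$, $\Phi(G)\in[s]$ if and only if there exists $B^M\in G$ with $B\subseteq[s]$, which by Lemma \ref{lem:basic_clopens} is an open condition on $G$. For $\Phi^{-1}$, given a basic clopen $N_{B^M}\subseteq\Gen^{P_I}_M$ as in Lemma \ref{lem:basic_clopens}, $\Phi^{-1}(x)\in N_{B^M}$ iff $x\in B$, which is a Borel condition on $x\in\LG^{P_I}_M$. Composing $\hat{i}$ with $\Phi$ then yields the desired Borel isomorphism. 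No serious obstacle is anticipated: the reduction property is immediate from the preceding discussion, and continuity/measurability is checked directly on the subbases given by Lemma \ref{lem:basic_clopens} and the product topology on $2^\omega$.
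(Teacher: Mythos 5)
Your proof is correct and is essentially the paper's argument: the paper defines the single composite map $H\mapsto x_{i^{-1}(H)}$ directly (citing the proof of Lemma \ref{lem:equiv_posets}) and checks Borelness on the basic clopen sets of Lemma \ref{lem:basic_clopens}, which is exactly your factorization $\Phi\circ\hat{i}$ written in one step. The only cosmetic difference is that you verify Borel measurability of both $\Phi$ and $\Phi^{-1}$ separately (the forward biconditional for $\Phi(G)\in[s]$ uses a routine genericity/density argument you leave implicit), whereas the paper checks one direction and infers the other from injectivity.
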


\begin{proof}
	As in the proof of Lemma \ref{lem:equiv_posets}, the map $\hat{i}:\Gen^\P_M\to\LG^{P_I}_M$ given by $\hat{i}(H)=x_{i^{-1}(H)}$, the $M$-generic real corresponding to $i^{-1}(H)$, is a bijection which maps $\equiv^\P_M$-equivalent filters to $\equiv^{P_I}_M$-equivalent reals, and vice-versa. To see that $\hat{i}$ is Borel, given $N_p\subseteq\Gen^\P_M$ with $p\in\P$,
	\begin{align*}
		\hat{i}(N_p)=\{x\in\LG^{P_I}_M:x\in i(p)^\V\}=\LG^{P_I}_M\cap i(p)^\V,
	\end{align*}
	which is Borel by Lemma \ref{lem:idealized_Borel}. Thus, $\hat{i}^{-1}$ is Borel, and so $\hat{i}$ is as well.
\end{proof}

In sum, when analyzing the complexity of $\equiv^\P_M$ we will be able to use various equivalent presentations of $\P$.

\subsection{Automorphisms and homogeneity}


Given a partial order $\P$ in $M$, let $\Aut^M(\P)$ be the automorphism group of $\P$ in $M$. By absoluteness,
\[
	\Aut^M(\P)=\Aut(\P)\cap M.
\]

There is a natural action $\Aut^M(\P)\curvearrowright
\Gen^\P_M$ given by
\[
	(e, G)\mapsto e''G=\{e(p):p\in G\},
\]
for $e\in\Aut^M(\P)$ and $G\in\Gen^\P_M$. This action is well-defined and $\equiv^\P_M$-invariant, in the sense that $M[e'' G]=M[G]$ (Corollary VII.7.6 in \cite{MR597342}). Treating $\Aut^M(\P)$ as a discrete group, this action is continuous.

A partial order $\P$ is \emph{weakly homogeneous} (in $M$) if for all $p,q\in\P$, there is an automorphism $e$ of $\P$ (in $M$) such that $e(p)$ is compatible with $q$.

A continuous action of a group $\Gamma$ (or equivalence relation $E$, respectively) on a Polish space $X$ is \emph{generically ergodic}  if every $\Gamma$-invariant ($E$-invariant, respectively) Borel set is either meager or comeager. Equivalently, the action of $\Gamma$ is generically ergodic if and only if for all non-empty open $U,V\subseteq X$, there is a $\gamma\in\Gamma$ such that $\gamma(U)\cap V\neq\emptyset$ (cf.~Proposition 6.1.9 in \cite{MR2455198})

\begin{lemma}\label{lem:gen_ergodic}
	Let $\P$ be a partial order in $M$. The action of $\Aut^M(\P)$ on $\Gen^\P_M$ is generically ergodic if and only if $\P$ is weakly homogeneous in $M$.
\end{lemma}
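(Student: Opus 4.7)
The plan is to use the equivalent form of generic ergodicity stated in the excerpt (any two nonempty open sets can be translated to intersect) together with the basis of $\Gen^\P_M$ by the clopen sets $N_p$ from Lemma \ref{lem:basic_clopens}. This reduces the entire statement to a purely combinatorial condition on $\P$.

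First I would record three easy facts: (i) $N_p \neq \emptyset$ for every $p \in \P$, since $M$ is countable and so every $p$ extends to an $M$-generic filter; (ii) $N_p \cap N_q \neq \emptyset$ if and only if $p$ and $q$ are compatible in $\P$, with one direction using a common lower bound extended to a generic, and the other direction using that any filter containing $p,q$ must witness their compatibility; (iii) for $e \in \Aut^M(\P)$, one has $e'' N_p = N_{e(p)}$, which follows by noting $p \in G \Leftrightarrow e(p) \in e'' G$, together with the bijectivity of $G \mapsto e''G$.

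With these in hand, the equivalence becomes essentially tautological. For $(\Leftarrow)$, assume $\P$ is weakly homogeneous in $M$. Given nonempty open $U,V \subseteq \Gen^\P_M$, shrink via Lemma \ref{lem:basic_clopens} to find $N_p \subseteq U$ and $N_q \subseteq V$; by weak homogeneity pick $e \in \Aut^M(\P)$ with $e(p)$ compatible with $q$, so by (ii) and (iii),
\[
    e''N_p \cap N_q = N_{e(p)} \cap N_q \neq \emptyset,
\]
which gives the required $e$ for generic ergodicity via the equivalent formulation quoted from \cite{MR2455198}. For $(\Rightarrow)$, given arbitrary $p,q \in \P$, both $N_p$ and $N_q$ are nonempty open by (i), so by generic ergodicity there is $e \in \Aut^M(\P)$ with $N_{e(p)} \cap N_q = e''N_p \cap N_q \neq \emptyset$, and (ii) then yields compatibility of $e(p)$ with $q$, which is weak homogeneity.

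I do not foresee a genuine obstacle; the only point requiring any care is checking (ii), specifically that compatibility of $p,q$ in $\P$ suffices to place a common $M$-generic in $N_p \cap N_q$ — but this is immediate from the existence of $M$-generics below any given condition, which is already used implicitly throughout the section.
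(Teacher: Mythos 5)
Your proof is correct and follows essentially the same route as the paper: both use the open-set characterization of generic ergodicity together with the clopen basis $N_p$ from Lemma \ref{lem:basic_clopens}, with the forward direction reading compatibility of $e(p)$ and $q$ off a filter in $e(N_p)\cap N_q$ and the backward direction producing a generic below a common lower bound (the paper phrases this via a generic containing $e^{-1}(r)$ rather than your identity $e\cdot N_p=N_{e(p)}$, a purely cosmetic difference).
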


\begin{proof}
	($\Leftarrow$) Suppose that $\P$ is weakly homogeneous. Let $U,V\subseteq\Gen^\P_M$ be non-empty open sets. By Lemma \ref{lem:basic_clopens}, we may assume that $U=N_p$ and $V=N_q$, for some $p,q\in\P$. By weak homogeneity, there is an $e\in\Aut^M(\P)$ such that $e(p)$ is compatible with $q$, say with common lower bound $r$. Let $G$ be an $M$-generic filter for $\P$ which contains $e^{-1}(r)$. Since $e^{-1}(r)\leq p$, $G\in N_p$. Likewise, since $r\in e''G$ and $r\leq q$, $e''G\in N_q$, proving $e(U)\cap V\neq\emptyset$.
	
	($\Rightarrow$) Suppose that the action is generically ergodic. Pick $p,q\in\P$. By generic ergodicity, there is an $e\in\Aut^M(\P)$ such that $e(N_p)\cap N_q\neq\emptyset$. Say $e''G\in e(N_p)\cap N_q$. But then, $e(p),q\in e''G$, and $e''G$ is a filter, so $e(p)$ and $q$ are compatible.
\end{proof}

\begin{thm}\label{thm:hom_nonsmooth}
	Let $\P$ be an atomless partial order in $M$. If $\P$ is weakly homogeneous in $M$, then $\equiv^\P_M$ is not smooth.
\end{thm}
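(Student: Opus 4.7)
The plan is to combine generic ergodicity of the $\Aut^M(\P)$-action (from Lemma \ref{lem:gen_ergodic}) with the fact that $\Gen^\P_M$ is a perfect Polish space (from Lemma \ref{lem:atomless_perfect}) and the fact that $\equiv^\P_M$-classes are countable (from Lemma \ref{lem:equiv_CBER}), to rule out smoothness via the standard meager/comeager dichotomy.

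First, I would observe that the orbit equivalence relation $E_{\Aut^M(\P)}$ of the action $\Aut^M(\P)\curvearrowright\Gen^\P_M$ is contained in $\equiv^\P_M$, since $M[e''G]=M[G]$ for all $e\in\Aut^M(\P)$ and $G\in\Gen^\P_M$. Consequently, every $\equiv^\P_M$-invariant Borel set is $\Aut^M(\P)$-invariant, and so by Lemma \ref{lem:gen_ergodic} (applied with weak homogeneity), every $\equiv^\P_M$-invariant Borel subset of $\Gen^\P_M$ is meager or comeager; that is, $\equiv^\P_M$ is generically ergodic.

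Next, I would argue the standard fact that a generically ergodic Borel equivalence relation which is smooth must have a comeager equivalence class. Suppose toward contradiction that $f:\Gen^\P_M\to\R$ is a Borel reduction of $\equiv^\P_M$ to $\Delta(\R)$. Fixing a countable basis $\{U_n:n\in\omega\}$ for $\R$, each preimage $f^{-1}(U_n)$ is $\equiv^\P_M$-invariant and Borel, hence meager or comeager. Intersecting the comeager ones and removing the meager ones produces a comeager set $A\subseteq\Gen^\P_M$ on which $f$ is constant (its value being pinned down by the intersection of a shrinking sequence of basic open neighborhoods). Thus a single $\equiv^\P_M$-class, namely $f^{-1}(\{f(x)\})$ for any $x\in A$, is comeager in $\Gen^\P_M$.

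Finally, I would derive the contradiction: since $\P$ is atomless, $\Gen^\P_M$ is a nonempty Polish space with no isolated points by Lemma \ref{lem:atomless_perfect}, hence perfect, and in particular every countable subset is meager by the Baire category theorem. But $\equiv^\P_M$-classes are countable by Lemma \ref{lem:equiv_CBER}, so no class can be comeager. This contradiction shows that $\equiv^\P_M$ is not smooth. I do not expect any serious obstacle here; the only step requiring care is the passage from smoothness plus generic ergodicity to a comeager class, and that is a routine Baire-category argument using a countable basis for $\R$.
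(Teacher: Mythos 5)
Your proposal is correct and follows essentially the same route as the paper: weak homogeneity gives generic ergodicity (Lemma \ref{lem:gen_ergodic}), atomlessness makes countable sets meager in $\Gen^\P_M$ (Lemma \ref{lem:atomless_perfect}), and together these rule out smoothness since classes are countable. The only difference is that you prove the standard fact that a smooth, generically ergodic relation must have a comeager class via the explicit Baire-category argument with a countable basis, whereas the paper simply cites it (Proposition 6.1.10 in Gao's book); your transfer of generic ergodicity from the $\Aut^M(\P)$-orbit relation up to $\equiv^\P_M$ is exactly the ``inherited by the superrelation'' step the paper uses.
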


\begin{proof}
	Since $\P$ is atomless, countable sets are meager in $\Gen^\P_M$ by Lemma \ref{lem:atomless_perfect}. The action of $\Aut^M(\P)$ of $\Gen^\P_M$ is generically ergodic by Lemma \ref{lem:gen_ergodic}, and has meager orbits, so it is not smooth (Proposition 6.1.10 in \cite{MR2455198}). The induced orbit equivalence relation is a subequivalence relation of $\equiv^\P_M$, and thus these properties are inherited by $\equiv^\P_M$.
\end{proof}

We caution that weak homogeneity is not preserved by forcing equivalence; for every partial order, there is a \emph{rigid} (i.e.,~having trivial automorphism group) partial order with the same Boolean completion \cite{MO184806}. However, weak homogeneity of the Boolean completion is preserved by forcing equivalence (cf.~Theorem 8 in \cite{MR3490908}). Moreover, the apparently weaker condition of \emph{cone homogeneity}, that for every $p,q\in\P$, there are $p'\leq p$ and $q'\leq q$ such that $\P\restr p'\iso\P\restr q'$, implies weak homogeneity of the Boolean completion provided $\P$ is atomless (Fact 1 in \cite{MR2448954}). In practice, this latter property is often easier to verify.

The following are examples of atomless partial orders $\P$ satisfying one of the aforementioned homogeneity conditions, and thus, by Theorem \ref{thm:hom_nonsmooth}, yield a non-smooth $\equiv^\P_M$.

\begin{example}
	\emph{Cohen forcing}: If we take our presentation of Cohen forcing $\C$ to be the infinite binary tree $2^{<\omega}$, ordered by extension, then given any $p,q\in 2^{<\omega}$, say with $|p|\leq|q|$, we can extend $p$ to $p'$ with $|p'|=|q|$, and use that the automorphism group of $2^{<\omega}$ acts transitively on each level to get an automorphism $e$ such that $e(p)\leq e(p')=q$. As mentioned in the introduction, the non-smoothness of $\equiv^\C_M$ was previously observed in \cite{ClemCoskDwor}.
\end{example}

\begin{example}
	\emph{Random forcing}: Let $\B$ be all non-null Borel subsets of $2^\omega$ in $M$, ordered by an inclusion. This is the idealized forcing corresponding the null ideal. The weak homogeneity of $\B$ boils down to the following fact: Whenever $A$ and $B$ are positive measure Borel sets, there is an $s\in 2^{<\omega}$ such that the translate of $A$ by $s$, adding modulo $2$ in each coordinate, has positive measure intersection with $B$. This is a consequence of the Lebesgue Density Theorem (cf.~Chapter 7 of \cite{MR924157}): Take basic open sets determined by finite binary strings of the same length and having a sufficiently large proportion of their mass intersecting each of $A$ and $B$, respectively. Then, the $s\in 2^{<\omega}$ which translates one string to the other will be as desired.
\end{example}

\begin{example}\label{ex:other_posets}
	\emph{Sacks forcing}, \emph{Miller forcing}, \emph{Mathias forcing}, \emph{Laver forcing} and many other classical forcing notions are easily seen to be cone homogeneous for the following reason: the cone below any condition is isomorphic to the whole partial order. For descriptions of these examples, see \cite{MR1940513}.
\end{example}

Being a countable Borel equivalence relation, $\equiv^\P_M$ is induced by some Borel action of a countable group. When the partial order is a complete Boolean algebra $\A$ in $M$, this group can be taken to be $\Aut^M(\A)$, and the action the canonical one on generics described above:

\begin{thm}[Vop\v{e}nka--H\'{a}jek \cite{MR0444473}; Theorem 3.5.1 in \cite{MR0373889}]\label{thm:bool_auts}
	Let $\A$ be a complete Boolean algebra in $M$. If $G$ and $H$ are $M$-generic filters for $\A$ and $M[G]=M[H]$, then there is involutive automorphism $e$ of $\A$ in $M$ such that $H=e''G$.	
\end{thm}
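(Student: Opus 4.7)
The plan is to construct the required $e$ via the Boolean-valued interpretation of an appropriate $\A$-name. Since $M[G]=M[H]$, we can pick an $\A$-name $\dot{G}^{\ast}\in M$ with $(\dot{G}^{\ast})^H=G$. By restricting below a condition in $H$ forcing the relevant properties, we may assume that $\forces_\A$ ``$\dot{G}^{\ast}$ names an $\check{M}$-generic filter on $\check{\A}$ producing the same generic extension as the canonical name $\dot{G}_{\mathrm{gen}}$ for the generic''. Define $e\colon\A\to\A$ in $M$ by $e(a)=\llbracket\check{a}\in\dot{G}^{\ast}\rrbracket_\A$.

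I would then verify that $e$ is a complete Boolean homomorphism, each clause coming from one property of generic filters forced on $\dot{G}^{\ast}$: preservation of finite meets from closure of filters under meets; preservation of Boolean complements from the ultrafilter property of generic filters in a complete Boolean algebra (exactly one of $a,\neg a$ lies in any generic); and preservation of arbitrary joins of $M$-indexed families from the fact that generics meet every maximal antichain in $M$. To see $e$ is an automorphism, pick another $\A$-name $\dot{H}\in M$ with $\dot{H}^G=H$ and set $f(a)=\llbracket\check{a}\in\dot{H}\rrbracket_\A$; a direct forcing computation from $e(a)\in K\iff a\in(\dot{G}^{\ast})^K$ and the analogous identity for $f$, both valid for every generic $K$, shows $e\circ f=f\circ e=\mathrm{id}_\A$. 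Taking $K=H$ in the first equivalence yields $e(a)\in H\iff a\in G$, i.e. $e''G=H$.

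The main obstacle will be upgrading $e$ to an involution. The naive construction only ensures that $e^{2}$ preserves $G$ and $H$ setwise, which falls short of $e^{2}=\mathrm{id}_\A$. To force involutivity I would replace $\dot{G}^{\ast}$ by a symmetrically chosen name $\dot\tau\in M$ satisfying $\dot\tau^{G}=H$ together with the global self-duality condition $\forces_\A$ ``$\dot\tau$ applied to its own interpretation on the generic returns the generic''. Such a $\dot\tau$ would be built by defining, uniformly in the generic filter $K$, an $M$-absolute ``swap-partner'' pairing on the countable $\equiv^\A_M$-class of $K$ (itself a definable subset of $M[K]$), chosen so that $G$ is paired with $H$ and, crucially, so that the definition depends only on the unordered class and not on the distinguished representative being used as a parameter. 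With such a $\dot\tau$ in hand, applying the equivalence $e(a)\in K\iff a\in\dot\tau^{K}$ twice yields $e^{2}(a)\in K\iff a\in\dot\tau^{\dot\tau^{K}}=K$ for every generic $K$, so $e^{2}(a)=a$ throughout $\A$ since elements of $\A$ are separated by generic filters. Producing this uniformly involutive swap-partner name is the chief technical hurdle.
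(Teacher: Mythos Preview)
The paper does not prove this theorem; it is quoted from the literature (Vop\v{e}nka--H\'{a}jek, and Grigorieff's exposition).  So there is no ``paper's own proof'' to compare against, only the classical one.  Your outline is in the spirit of that classical argument---defining $e(a)=\llbracket\check a\in\dot G^{*}\rrbracket$ is exactly how one starts---but two steps are not justified as written.

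First, the bijectivity of $e$.  From your identities $e(a)\in K\Leftrightarrow a\in(\dot G^{*})^{K}$ and $f(a)\in K\Leftrightarrow a\in\dot H^{K}$ one computes
\[
e(f(a))\in K\ \Longleftrightarrow\ f(a)\in(\dot G^{*})^{K}\ \Longleftrightarrow\ a\in\dot H^{(\dot G^{*})^{K}},
\]
so $e\circ f=\mathrm{id}$ would require $\dot H^{(\dot G^{*})^{K}}=K$ for \emph{every} generic $K$.  You have only arranged $(\dot G^{*})^{H}=G$ and $\dot H^{G}=H$, which verifies this for the single value $K=H$; nothing prevents the two names from failing to invert one another at other generics.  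The fix is to discard the arbitrary $\dot H$ and instead use the name produced by the hypothesis $\Vdash M[\dot G^{*}]=M[\dot G_{\mathrm{gen}}]$: this equality yields, in $M$, a name $\rho$ with $\Vdash\rho^{\dot G^{*}}=\dot G_{\mathrm{gen}}$, and \emph{that} $\rho$ gives the global inverse.  Equivalently, one argues directly that the image of $e$ is a complete subalgebra $\A'$ with $M[K\cap\A']=M[K]$ for all generic $K$, forcing $\A'=\A$.

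Second, the involution.  You correctly identify that a self-dual name $\dot\tau$ with $\Vdash\dot\tau^{\dot\tau}=\dot G_{\mathrm{gen}}$ is what is needed, but the proposed ``$M$-absolute swap-partner pairing on the $\equiv^{\A}_{M}$-class'' is not a construction: that class is countable and carries no canonical involution, and the constraint that $G$ be paired with $H$ is extrinsic data not visible from the class alone.  The classical device is to pass through the \emph{unordered} pair $\{G,H\}$: take a name $\dot P$ with $\dot P^{G}=\{G,H\}$, force below a condition that $\dot P$ is a two-element set of $M$-generics with the same extension containing $\dot G_{\mathrm{gen}}$, and let $\dot\tau$ be ``the other element of $\dot P$''.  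The symmetry needed for $\dot\tau^{\dot\tau}=\dot G_{\mathrm{gen}}$ then comes from the fact that $\dot P$ names the same unordered pair from either of its members, which one secures by choosing $\dot P$ via the intermediate model $M[\{G,H\}]$.  Your sketch gestures at this idea but does not supply it.
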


\begin{example}\label{ex:rigid}
	As an immediate consequence of Theorem \ref{thm:bool_auts}, if $\A$ is a rigid complete Boolean algebra in $M$, then $\equiv^\A_M$ is smooth. An example of a non-trivial Boolean algebra with this property appears in \cite{MR0292670}. Another example, which shares properties (e.g., fusion and adding reals of minimal degree) with Sacks and Miller forcing, is given in \cite{MR1398120}.
\end{example}

\section{Cohen reals}\label{sec:Cohen}

To further analyze the complexity of $\equiv^\C_M$ for Cohen forcing $\C$, we will find it useful to consider its idealized presentation; let $\C$ be all non-meager Borel subsets of $2^\omega$, ordered by containment, as computed in $M$. Since $\C$ satisfies the countable chain condition, we can identify the Boolean completion $\bar{\C}$ of $\C$, in $M$, with the quotient of all Borel subsets modulo the meager ideal (cf.~Lemma II.2.6.3 in \cite{MR0265151}). Let $\LC$ be the set of $M$-generic Cohen reals in $2^\omega$, that is, $\LC=\LG^\C_M$ in the notation of \S\ref{sec:gen_results}. The main result of this section is the following:

\begin{thm}\label{thm:cohen_hhf}
	$\equiv^\C_M$ is an increasing union of countably many hyperfinite Borel equivalence relations. In particular, $\equiv^\C_M$ is amenable.
\end{thm}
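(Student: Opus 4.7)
The plan is to work with the idealized presentation of $\C$ as non-meager Borel subsets of $2^\omega$, so that by Lemma~\ref{lem:idealized_equiv_posets} it suffices to analyze $\equiv^\C_M$ on the standard Borel space $\LC$ of $M$-generic Cohen reals. Identifying the Boolean completion $\bar{\C}$ with the regular open algebra of Cantor space, Theorem~\ref{thm:bool_auts} (Vop\v{e}nka--H\'{a}jek) gives that $x \equiv^\C_M y$ iff some involutive $e \in \Aut^M(\bar{\C})$ sends the canonical filter attached to $x$ to that of $y$. Each such $e$ is represented by a Borel meager-preserving automorphism $\phi_e$ of $2^\omega$ coded in $M$, and since any $M$-generic Cohen real avoids every meager Borel set coded in $M$, each $\phi_e$ is well-defined on all of $\LC$. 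Consequently $\equiv^\C_M$ is the orbit equivalence relation of the natural action of the countable group $\Aut^M(\bar{\C})$ on $\LC$.

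To produce the claimed decomposition, I would enumerate $\Aut^M(\bar{\C}) = \{e_n : n \in \omega\}$ inside $M$, set $H_n := \langle e_0,\dots,e_{n-1}\rangle$, and let $E_n$ be the orbit equivalence of the $H_n$-action on $\LC$. Then $E_n \subseteq E_{n+1}$, each $E_n$ is a CBER by the same Borel-forcing-relation argument as in Lemma~\ref{lem:equiv_CBER}, and $\bigcup_n E_n = \equiv^\C_M$. Amenability of $\equiv^\C_M$ will then be immediate from hyperfiniteness of each $E_n$, since the class of amenable CBERs is closed under increasing countable unions.

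The heart of the proof, and the main obstacle, is showing each $E_n$ is hyperfinite. My approach would be to exploit that $\bar{\C}$ is the completion of the countable atomless Boolean algebra $\B = \bigcup_k \B_k$ of clopen subsets of $2^\omega$, where $\B_k$ is the finite subalgebra generated by cylinders of length $k$. Although elements of $H_n$ do not in general preserve $\B$, I would attempt to construct in $M$, via the Kuratowski--Ulam theorem and the ccc of $\bar{\C}$, an increasing sequence of finite Boolean subalgebras $\B_k' \subseteq \bar{\C}$ together with finite subgroups $H_{n,k}$ of permutations of $\B_k'$, such that the corresponding orbit equivalences $F_{n,k}$ on $\LC$ have finite classes and $E_n = \bigcup_k F_{n,k}$. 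This would realize each $E_n$ as an increasing union of finite-class Borel equivalences, hence as hyperfinite.

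The principal difficulty lies in making this finite-approximation step precise: one must guarantee that every pair of $E_n$-related Cohen reals is already $F_{n,k}$-related for some $k$, which requires the $\B_k'$ and $H_{n,k}$ to jointly capture the action of $H_n$ on all of $\LC$, not merely on a comeager set. I expect this to rest on a density argument inside $M$, using that the filter associated to any Cohen real meets every dense set of $\bar{\C}$ in $M$ to convert meager-error approximations into genuine finite-level coincidences on the Cohen-generic side.
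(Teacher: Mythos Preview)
Your overall architecture matches the paper: represent $\equiv^\C_M$ via Vop\v{e}nka--H\'ajek as the orbit equivalence of $\Aut^M(\bar\C)$ acting on $\LC$, write it as an increasing union $\bigcup_n E_n$ over finitely generated subgroups, and then argue each $E_n$ is hyperfinite. One small slip: you cannot enumerate $\Aut^M(\bar\C)$ \emph{inside} $M$, since in $M$ this group has size continuum; the enumeration must be done in $\V$, and then each $H_n$ (and the action and $E_n$) is separately an object of $M$. The paper is careful about exactly this point.

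The substantive gap is your proposed proof that each $E_n$ is hyperfinite. Your plan is to approximate $H_n$ by finite permutation groups of finite Boolean subalgebras $\B_k'$ and write $E_n=\bigcup_k F_{n,k}$ with each $F_{n,k}$ finite. There is no reason to expect this to work: a generic element of $\Aut(\bar\C)$ does not preserve, or even approximately preserve, any finite subalgebra, and a finitely generated subgroup $H_n$ can easily contain a free group, so there is no soft route to hyperfiniteness of its orbit relation on all of $2^\omega$. Kuratowski--Ulam and ccc give you nothing about finite approximability of automorphisms. Indeed, if your scheme worked it would prove each $E_n$ hyperfinite on \emph{all} of $2^\omega$, which is false in general for orbit relations of free subgroups acting on Cantor space.

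The paper sidesteps this entirely and never proves $E_n$ is hyperfinite globally. Instead it invokes, inside $M$, the theorem of Hjorth--Kechris / Sullivan--Weiss--Wright / Woodin that every countable Borel equivalence relation is hyperfinite on a comeager Borel set; this yields comeager $C_n^M$ with $E_n^M\restr C_n^M$ hyperfinite. Upward absoluteness of hyperfiniteness then gives $E_n\restr C_n$ hyperfinite in $\V$. The crucial, Cohen-specific observation is that $\LC$ is contained in every comeager Borel set coded in $M$, so $\LC\subseteq\bigcap_n C_n$, and hence each $E_n\restr\LC$ is hyperfinite. This is the missing idea in your proposal: you do not need a direct hyperfiniteness proof, only generic hyperfiniteness applied in $M$ together with the defining property of Cohen reals.
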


We will need the following results from the literature:

\begin{thm}[Hjorth--Kechris \cite{MR1423420}, Sullivan--Weiss--Wright \cite{MR833710}, Woodin; Theorem 12.1 of \cite{MR2095154}]\label{thm:gen_hf}
	If $E$ is a countable Borel equivalence relation on a Polish space $X$, then there is a comeager Borel set $C\subseteq X$ such that $E\restr C$ is hyperfinite.
\end{thm}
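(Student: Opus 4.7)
The plan is to prove Theorem \ref{thm:gen_hf} by combining the Feldman--Moore theorem, a topological refinement, and an inductive Baire-category construction of finite marker sets along the orbits. The output will be an exhausting increasing sequence of finite Borel equivalence relations on a comeager Borel set.

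First I would apply Feldman--Moore to realize $E$ as the orbit equivalence relation $E_\Gamma^X$ of a Borel action of a countable group $\Gamma=\{\gamma_n:n\in\omega\}$ with $\gamma_0=1$, and write $\Gamma_n=\{\gamma_0,\ldots,\gamma_n\}^{\pm 1}$, an increasing exhausting sequence of finite symmetric subsets of $\Gamma$. By the standard Becker--Kechris refinement, I may replace the Polish topology on $X$ with a finer one generating the same Borel sets and with respect to which every $\gamma\in\Gamma$ acts by homeomorphism; this is needed so that Kuratowski--Ulam and Vaught-transform arguments are available. The target is a decreasing sequence $C_0\supseteq C_1\supseteq\cdots$ of comeager Borel sets, Borel marker sets $M_n\subseteq C_n$, and finite Borel equivalence relations $F_n$ on $C_n$ with $F_n\subseteq F_{n+1}$ and $\Gamma_n\cdot x\subseteq [x]_{F_n}$ for all $x\in C_n$, so that $C=\bigcap_n C_n$ and $F_n\restr C$ witness the hyperfinite decomposition.

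At stage $n$, assuming $C_{n-1}$ and $M_{n-1}$ are in hand, I would choose a Borel linear order $<$ on $X$ and define $M_n\subseteq C_{n-1}$ as the set of $x$ which are $<$-minimal in a suitable $\Gamma$-neighborhood, engineered so that (i) $M_n$ meets every $E$-orbit in $C_{n-1}$ and (ii) distinct points of $M_n$ lie at word-distance greater than some rapidly growing $k_n$ in the Cayley graph of $(\Gamma,\Gamma_n)$ restricted to each orbit. A Kuratowski--Ulam argument on basic open sets, combined with iterated Vaught transforms, shows that the collection of points failing these local properties is meager, so after discarding it we obtain a comeager $C_n\subseteq C_{n-1}$ on which $M_n$ has the desired features. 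Defining $F_n$ on $C_n$ by the Voronoi rule $x\,F_n\,y\iff x,y$ share the same nearest marker in $M_n$, each $F_n$-class has $\Gamma_n$-word-diameter at most $k_n$ and is therefore finite; the monotonicity $F_n\subseteq F_{n+1}$ is secured by arranging $M_{n+1}\subseteq M_n$ during the inductive step.

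Finally, on $C=\bigcap_n C_n$, which is comeager as a countable intersection of comeager sets, any $x\,E\,y$ satisfies $y=\gamma\cdot x$ for some $\gamma\in\Gamma_n$ with $n$ large enough, so that $x$ and $y$ land in the same block and $y\in [x]_{F_n}$; thus $\bigcup_n F_n\restr C = E\restr C$. The main obstacle is the simultaneous maintenance, through the induction, of the competing requirements that $M_n$ form a complete section (so no orbit is lost) and that its points are $\Gamma_n^{k_n}$-separated (so $F_n$-classes remain finite), while shrinking $M_n$ monotonically across stages. This tension is what makes the theorem nontrivial and requires a telescoping fusion-type bookkeeping of the countably many meager exceptional sets; this is the technical core shared, in different presentations, by the proofs of Hjorth--Kechris \cite{MR1423420}, Sullivan--Weiss--Wright \cite{MR833710}, and Woodin, as recorded in \cite{MR2095154}.
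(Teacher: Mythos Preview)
The paper does not prove Theorem~\ref{thm:gen_hf}; it is quoted from the literature (with the three attributions in the statement) and invoked as a black box in the proof of Theorem~\ref{thm:cohen_hhf}. There is therefore no argument in the paper to compare your proposal against.

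On its own merits, your sketch has the right architecture---Feldman--Moore, Becker--Kechris change of topology, then an inductive marker construction on a shrinking comeager set---but the endgame has a genuine gap. First, $k_n$-\emph{separation} of markers bounds how close two markers can be, not how far an arbitrary point is from its nearest marker; without also arranging that $M_n$ is $k_n$-\emph{dense} (every orbit point within $\Gamma_n^{k_n}$ of some marker), the Voronoi cells of $M_n$ need not have bounded word-diameter, so your $F_n$ need not have finite classes. Second, and more seriously, the exhaustion step fails as written: knowing $y=\gamma\cdot x$ with $\gamma\in\Gamma_n$ does not force $x$ and $y$ to share a nearest marker in $M_n$---two adjacent points can straddle a Voronoi boundary at every stage. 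The published proofs avoid this by building the $F_n$ differently: rather than Voronoi cells, one uses a \emph{vanishing} sequence of Borel complete sections $S_0\supseteq S_1\supseteq\cdots$ with $\bigcap_n S_n=\emptyset$ on a comeager invariant set, and defines $F_n$-classes as the intervals between consecutive $S_n$-points along a fixed Borel ordering of each orbit. The exhaustion $\bigcup_n F_n=E$ then follows from $\bigcap_n S_n=\emptyset$, not from any word-length bookkeeping. Your ``telescoping fusion-type bookkeeping'' gestures at this, but the Voronoi mechanism you wrote down does not deliver it.
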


\begin{thm}[Maharam--Stone \cite{MR522175}]\label{thm:realize_auts}
	Every automorphism of the Boolean algebra of Borel sets modulo meager sets, in a complete metric space $X$, is induced by a meager-class preserving Borel bijection $f:X\to X$.
\end{thm}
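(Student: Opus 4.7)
The plan is to realize the abstract Boolean automorphism $\phi$ by a concrete point map by first identifying $\mathrm{Bor}(X)/\mathrm{Mgr}(X)$ with the regular open algebra $\mathrm{RO}(X)$ and then defining the realizing bijection pointwise on a comeager set as the unique limit of a shrinking filter of basic open neighborhoods---this final step is where completeness of $X$ enters.

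First, since $X$ is completely metrizable, every Borel set has the Baire property and so differs from a unique regular open set by a meager set. This gives an isomorphism $\mathrm{Bor}(X)/\mathrm{Mgr}(X) \iso \mathrm{RO}(X)$, under the operations $U \vee V = \mathrm{int}(\overline{U \cup V})$ and $U \wedge V = U \cap V$, so we may view $\phi$ as an automorphism of $\mathrm{RO}(X)$. Fix a countable base $\{U_n\}_{n \in \omega}$ of $X$ consisting of regular open balls of rational radii, and set $V_n := \phi(U_n) \in \mathrm{RO}(X)$. For each of the countably many finitary $\mathrm{RO}$-relations holding among the $U_n$---$U_n \wedge U_m = 0$, $U_n \leq U_m$, $U_n = \bigvee_{k \in F} U_{n_k}$ for finite $F$, etc.---the analogous relation holds among the $V_n$, which means the corresponding set-theoretic identity (e.g., $V_n \cap V_m$ is meager, or $V_n \setminus V_m$ is meager) holds modulo a single meager exceptional set. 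Taking $N$ to be the union of all these meager exceptional sets, together with a further meager set ensuring the diameter-shrinking condition below, yields a single meager $N \subseteq X$.

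For $x \in X \setminus N$, the membership trace $J(x) := \{n : x \in V_n\}$ is upward closed in the $\mathrm{RO}$-order on $\{U_n\}$, downward directed, detects disjointness, and (after the extra shrinkage of $N$) contains basic sets of arbitrarily small diameter; in short, it behaves exactly like the neighborhood trace of a point in the basis $\{U_n\}$. By completeness of $X$, $\bigcap_{n \in J(x)} \overline{U_n}$ is a singleton, whose point I declare to be $f(x)$. By construction $f(x) \in U_n \iff x \in V_n$, whence $f^{-1}(U_n) \triangle V_n \subseteq N$ is meager and therefore $[f^{-1}(B)] = \phi([B])$ for every Borel $B$. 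Running the symmetric construction with $\phi^{-1}$ produces an inverse $g$ on another comeager set; on the intersection of the two comeager Borel sets, $f$ is a Borel bijection with Borel inverse, which extends to a Borel bijection of all of $X$ using the Borel isomorphism theorem applied to the meager Borel complements. Since $\phi$ is an automorphism, $f$ preserves meager sets in both directions, as required.

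The main obstacle is the bookkeeping in the second paragraph: one must ensure that a \emph{single} meager set $N$ simultaneously handles all the countably many Boolean identities and the diameter-shrinking condition, so that $J(x)$ genuinely behaves like a neighborhood trace for every $x \notin N$. Each individual failure is meager by transport of a single $\mathrm{RO}$-identity through $\phi$, and the countable union remains meager, but isolating the conditions as finitary statements about the $V_n$'s is where care is needed. Once $N$ is in hand, completeness of the metric does the analytic work of converting the combinatorial data $J(x)$ into an honest point $f(x)$.
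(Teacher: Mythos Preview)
The paper does not prove this statement; it is quoted as a black-box result of Maharam--Stone (cited as \cite{MR522175}) and invoked only inside $M$ in the proof of Theorem~\ref{thm:cohen_hhf}. So there is no ``paper's own proof'' to compare against.

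That said, your sketch follows the standard route to the Maharam--Stone theorem: pass to the regular open algebra, transport a countable base through $\phi$, absorb all the countably many Boolean discrepancies into a single meager set $N$, and then read off $f(x)$ from the trace $J(x)$ using completeness. This is the right architecture, and the bookkeeping you flag in your last paragraph is indeed where the work lies. Two points are worth tightening. First, you silently assume $X$ is separable when you fix a \emph{countable} base $\{U_n\}$; the theorem as stated says only ``complete metric space,'' and the general case needs either a reduction to the separable case or an uncountable-basis version of the argument. For the paper's application ($X=2^\omega$) this is harmless, but it is a gap relative to the stated hypothesis. Second, the line ``by construction $f(x)\in U_n \iff x\in V_n$'' is not literally what your definition gives: from $n\in J(x)$ you get $f(x)\in\overline{U_n}$, not $f(x)\in U_n$. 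The honest conclusion is only that $f^{-1}(U_n)\,\triangle\,V_n$ is meager, which you obtain by using that $U_n=\bigvee\{U_m:\overline{U_m}\subseteq U_n\}$ in $\mathrm{RO}(X)$ and pushing this join through $\phi$; that weaker statement is exactly what you need, so the argument survives, but the biconditional as written overclaims.
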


Here, a bijection $f:X\to X$ on a Polish space is said to be \emph{meager-class preserving} if both $f$ and $f^{-1}$ preserve meager sets.

A property of a Borel set $B$ coded in $M$ is \emph{absolute} if it has the same truth value for $B^M$ in $M$ as for $B$ in $\V$. We can show that a property of Borel sets is absolute by expressing it as a $\Pi^1_1$ predicate in their codes; absoluteness then follows by Mostowski's Absoluteness Theorem (Theorem 25.4 in \cite{MR1940513}). A property of $B$ is \emph{upwards absolute} if its truth for $B^M$ in $M$ implies its truth for $B$ in $\V$.

Coding a Borel set, (non-)membership in a Borel set, and containment of Borel sets, are all expressible as $\Pi^1_1$ predicates in the codes (Theorem II.1.2 and Corollary II.1.2 in \cite{MR0265151}), and thus absolute. These will be used implicitly in what follows. We will need the absoluteness of several additional properties:

\begin{lemma}\label{lem:inj_borel_abs}
	The following notions are absolute for Borel sets $A$, $B$, and injective Borel functions $f$, coded in $M$:
	\begin{enumerate}[label=\textup{(\roman*)}]
		\item $f$ is an injective function.
		\item $A=f''B$.
	\end{enumerate}
\end{lemma}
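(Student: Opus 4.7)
The plan is to bring each of the two conditions into explicit $\Pi^1_1$ form in the relevant Borel codes, so that the absoluteness claim follows from Mostowski's Absoluteness Theorem. Throughout I identify a Borel function with its Borel graph, and use that membership and non-membership in a Borel set are each $\Pi^1_1$ in the code, as noted in the excerpt.

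For (i), the sentences ``$f$ is a function'' and ``$f$ is injective'' each unfold as a universally quantified first-order sentence; for instance, injectivity is
\[
	\forall x_1\,\forall x_2\,\forall y\,\bigl((x_1,y)\in f\land(x_2,y)\in f\to x_1=x_2\bigr).
\]
Rewriting each implication of the form ``$\psi\in f\to\chi$'' as ``$\psi\notin f\lor\chi$'' turns the matrix into a Boolean combination of $\Pi^1_1$ atoms, hence $\Pi^1_1$; universal first-order real quantification preserves this, and Mostowski then gives absoluteness.

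For (ii), the forward inclusion $f''B\subseteq A$ is $\forall x\forall y((x,y)\in f\land x\in B\to y\in A)$, which is $\Pi^1_1$ by the same rearrangement. The reverse inclusion $A\subseteq f''B$ is the main difficulty, being naively $\Pi^1_2$. I plan to compress it by invoking the Lusin--Suslin theorem: for injective Borel $f$, $\ran(f)$ is Borel, with a Borel code $\gamma$ extractable from a code for $f$ by an effective, $\ZF$-provable procedure (via Lusin's separation theorem for pairs of disjoint analytic sets). Given such a $\gamma$, injectivity turns the existential ``there is $b\in B$ with $f(b)=a$'' into the conjunction ``$a\in B_\gamma$, and every $b$ with $(b,a)\in f$ lies in $B$,'' the unique such $b$, if any, being forced to lie in $B$. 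Thus $A\subseteq f''B$ rewrites as $A\subseteq B_\gamma$ together with $\forall a\forall b(a\in A\land (b,a)\in f\to b\in B)$, both $\Pi^1_1$ in the codes.

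The main subtle point is the absoluteness of the Lusin--Suslin step itself: I need the code $\gamma$ produced from the code for $f$ in $M$ to yield $(\ran(f))^\V$ when interpreted in $\V$. This is routine, since the construction is carried out in $\ZF$ and so commutes with transitive extensions, but it is the one place where the argument moves beyond pure syntactic manipulation of Borel codes.
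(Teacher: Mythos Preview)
Your proof is correct and follows essentially the same route as the paper. For (i), your argument is identical. For (ii), the paper is slightly more concise: rather than splitting into two inclusions, it observes directly that ``$x\in f''B$'' is $\Pi^1_1$ in the codes for $f$ and $B$, citing the effective version of the Lusin--Suslin theorem (4D.7 in Moschovakis). Since ``$x\in f''B$'' is trivially $\Sigma^1_1$ as well, this makes it $\Delta^1_1$, and then both inclusions $A\subseteq f''B$ and $f''B\subseteq A$ become $\Pi^1_1$ at once. Your decomposition---handling $f''B\subseteq A$ directly, and reducing $A\subseteq f''B$ to a code for $\ran(f)$ together with a universal statement about preimages landing in $B$---is a correct and somewhat more explicit unpacking of the same effective content; your identification of the absoluteness of the Lusin--Suslin code construction as the one nontrivial point is exactly right, and is precisely what the citation to 4D.7 is carrying in the paper's version.
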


\begin{proof}
	(i) The statements that ``$f$ is a function'' and that ``$f$ is injective'' are universal in predicates for membership and non-membership in the graph of $f$, and thus can be expressed as $\Pi^1_1$ predicates in a code for $f$.

	(ii) It suffices to show that ``$x\in f''B$'' is $\Pi^1_1$ in the  codes for $f$ and $B$. This is analogous to the classical fact that injective Borel images of Borel sets are Borel and can be derived from its effective version (4D.7 in \cite{MR2526093}).
\end{proof}

\begin{lemma}\label{lem:eqrel_abs}
	The following notions are absolute for a Borel equivalence relation $E$ coded in $M$:
	\begin{enumerate}[label=\textup{(\roman*)}]
		\item $E$ is an equivalence relation.
		\item $E$ has finite classes.
		\item For a countable group $\Gamma$ in $M$ with a Borel action on $2^\omega$ coded in $M$, $E$ is the induced orbit equivalence relation.
	\end{enumerate}
\end{lemma}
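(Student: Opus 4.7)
The overarching strategy mirrors Lemma \ref{lem:inj_borel_abs}: I would express each of (i), (ii), (iii) as a $\Pi^1_1$ predicate in the Borel code for $E$ (together with, in (iii), the code for the action of $\Gamma$) and then invoke Mostowski's Absoluteness Theorem. Throughout I rely on the fact, already used in the previous lemma, that both ``$x \in B$'' and ``$x \notin B$'' are $\Pi^1_1$ in a Borel code for $B$, so Borel membership contributes as a $\Delta^1_1$ matrix to any Boolean combination. For (i), the axioms of reflexivity, symmetry, and transitivity unfold directly as universally quantified first-order statements over $2^\omega$ whose matrices are Boolean combinations of Borel membership in $E$; for instance, transitivity reads $\forall x, y, z\,((x,y)\in E \land (y,z)\in E \to (x,z)\in E)$, which is $\Pi^1_1$ in a code for $E$. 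Reflexivity and symmetry are analogous.

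For (ii), I would avoid the naive ``$\exists n$ every class has $\leq n$ elements'' formulation and instead express finiteness of classes as the nonexistence of an injective $\omega$-sequence lying in a single class: $\forall x \in 2^\omega \, \forall (y_n) \in (2^\omega)^\omega$, either $(y_n)$ fails to be injective or some $y_n$ is not $E$-related to $x$. The matrix is Borel in $x$, $(y_n)$, and the code of $E$, so the whole statement is $\Pi^1_1$. For (iii), since $\Gamma \in M$ is countable in $M$ and hence in $\V$, I would fix an enumeration $\{\gamma_n : n \in \omega\} \in M$ of $\Gamma$ and write ``$E$ is the orbit equivalence relation of the $\Gamma$-action'' as
\[
\forall x, y \in 2^\omega \, \bigl( (x,y) \in E \;\leftrightarrow\; \textstyle\bigvee_{n \in \omega}(\gamma_n \cdot x = y) \bigr).
\]
Each disjunct ``$\gamma_n \cdot x = y$'' is Borel in $(x,y)$ using the coded action, the countable disjunction is therefore Borel, and the full universally quantified statement is $\Pi^1_1$ in the codes for $E$ and the action.

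The main obstacle, such as it is, lies in (ii): the obvious translation of ``finite classes'' using an existential integer quantifier scoped under universal real quantifiers yields an unhelpful syntactic form, and the trick is to recognize that ``no injective $\omega$-sequence lies in a single class'' is the right reformulation, keeping the only number quantifier inside a Borel matrix. A smaller wrinkle in (iii) is that the absoluteness of the coded action together with the fact that the enumeration of $\Gamma$ lies in $M$ ensure that the countable disjunction computed in $M$ agrees with that computed in $\V$; thus the predicate defining the orbit equivalence relation has the same interpretation in both models, and no discrepancy arises from the choice of witnesses $\gamma_n$.
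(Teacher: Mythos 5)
Your proposal is correct and takes essentially the same route as the paper: each clause is rendered as a $\Pi^1_1$ predicate in the relevant codes (using that membership and non-membership in a coded Borel set are $\Pi^1_1$) and absoluteness then follows from Mostowski's theorem. The only deviation is in (ii), where the paper cites a standard fact that finiteness of a coded Borel set is $\Pi^1_1$, while you give a self-contained reformulation (no injective $\omega$-sequence inside a single class, with the number quantifiers absorbed harmlessly); this is an equally valid way to land in $\Pi^1_1$.
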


\begin{proof}
	(i) The statement that ``$E$ is an equivalence relation'' is universal in predicates for membership and non-membership in $E$, and thus can be expressed as a $\Pi^1_1$ predicate in a code for $E$.
	
	
	(ii) Being an equivalence class of a Borel equivalence relation can be expressed as a $\Pi^1_1$ predicate using predicates for membership and non-membership in the equivalence relation. Since the property of being finite can be expressed using a $\Pi^1_1$ predicate (cf.~Lemma II.1.6.7 in \cite{MR0265151}), having all classes finite is expressible as a $\Pi^1_1$ predicate as well.
	
	(iii) Both the statement that a Borel function $f:\Gamma\times 2^\omega\to 2^\omega$ is an action of $\Gamma$, and the fact that $E$ is the resulting orbit equivalence relation, are easily expressible as $\Pi^1_1$ predicates in the appropriate codes; we leave the verification to the reader.
\end{proof}

\begin{lemma}\label{lem:hf_up_abs}
	Being a hyperfinite Borel equivalence relation is upwards absolute\footnote{Being hyperfinite is a $\Sigma^1_2$ property in the codes, but whether it is $\Sigma^1_2$-complete, and thus not absolute for countable models, appears to be unknown.} for a Borel equivalence relation coded in $M$.	
\end{lemma}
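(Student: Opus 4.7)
The plan is to take a sequence witnessing hyperfiniteness of $E$ inside $M$ and verify, property by property via Mostowski absoluteness, that it continues to witness hyperfiniteness in $\V$. Suppose $M \models $ ``$E$ is hyperfinite''; then there is a sequence $(F_n)_{n\in\omega} \in M$ of Borel equivalence relations with finite classes, increasing under inclusion, whose union (computed in $M$) is $E$. Reinterpreting the sequence of codes in $\V$ yields Borel sets $F_n$ in $\V$.

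Most of the verification is routine bookkeeping. By Lemma \ref{lem:eqrel_abs}(i),(ii), each $F_n$ is an equivalence relation with finite classes in $\V$; by absoluteness of containment of Borel sets, the inclusions $F_n \subseteq F_{n+1}$ and $F_n \subseteq E$ persist in $\V$. In particular $\bigcup_{n\in\omega} F_n \subseteq E$ in $\V$, so the $(F_n)$ already witnesses every requirement in the definition of hyperfiniteness except possibly the reverse inclusion.

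The one step demanding genuine attention is therefore $E \subseteq \bigcup_{n\in\omega} F_n$ in $\V$. To obtain it, work inside $M$ (which is possible since the sequence $(F_n)_{n\in\omega}$ itself lies in $M$) to construct a Borel code for
\[
    D \;=\; E \setminus \bigcup_{n\in\omega} F_n \;=\; \bigcap_{n\in\omega}(E\setminus F_n).
\]
By hypothesis $D^M = \emptyset$. Emptiness of a Borel set, $\forall x\,(x\notin B)$, is $\Pi^1_1$ in its code (using that non-membership in a Borel set is $\Pi^1_1$), and is therefore absolute by Mostowski's Absoluteness Theorem. Hence $D = \emptyset$ in $\V$, giving $E = \bigcup_{n\in\omega} F_n$ in $\V$ and showing that $E$ is hyperfinite in $\V$. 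I do not anticipate any obstacle beyond assembling this absoluteness bookkeeping; the argument is an entirely direct analogue of those used for the preceding lemmas.
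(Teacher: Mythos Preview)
Your proof is correct and follows essentially the same approach as the paper's. The paper is slightly terser: it cites directly that countable unions are absolute for Borel codes (Lemma II.1.6.1 in Solovay \cite{MR0265151}), whereas you unpack this by handling the two inclusions separately and invoking $\Pi^1_1$-absoluteness of emptiness for the reverse inclusion, but the underlying mechanism is the same.
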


\begin{proof}
	Suppose $E$ is a Borel equivalence relation coded in $M$, and that in $M$, $E^M=\bigcup_{n\in\omega} E_n^M$, where each $E_n^M$ is a Borel equivalence relation with finite classes. Since containment and countable unions are absolute for Borel codes (Lemma II.1.6.1 in \cite{MR0265151}), $E=\bigcup_{n\in\omega} E_n$ in $\V$, and by Lemma \ref{lem:eqrel_abs}, is hyperfinite.
\end{proof}

If $g$ is a meager-class preserving Borel bijection coded in $M$, then by the absoluteness of meagerness (Lemma II.1.6.6 in \cite{MR0265151}) and Lemma \ref{lem:inj_borel_abs} above, $g^M$ is a meager-class preserving Borel bijection in $M$, and induces an automorphism of $\C$ (and $\bar{\C}$) given by: 
	\[
		B\mapsto (g^M)''(B).
	\]
	This in turn induces an action on $M$-generic filters for $\C$,  	\[
		H\mapsto g^M\cdot H=\{(g^M)''B:B\in H\}.
	\] 
	Passing to the generic reals, this action coincides with $g$'s action on $2^\omega$:

\begin{lemma}\label{lem:map_Cgens}
	Let $g:2^\omega\to 2^\omega$ be a meager-class preserving Borel bijection coded in $M$, $x\in\LC$, and $H_x$ the corresponding $M$-generic filter for $\C$. Then,
	\[
		(g^M)\cdot H_x=H_{g(x)}.
	\]
\end{lemma}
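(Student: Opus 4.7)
\textbf{Proof plan for Lemma \ref{lem:map_Cgens}.}
The plan is to unpack the two sides using the correspondence between $M$-generic reals and $M$-generic filters for an idealized forcing, and then use the absoluteness of injective Borel images (Lemma \ref{lem:inj_borel_abs}(ii)) to transfer the computation between $M$ and $\V$. First I would write $H_x = \{B^M \in \C : x \in B^\V\}$ and $H_{g(x)} = \{C^M \in \C : g(x) \in C^\V\}$, and recall that by construction $(g^M)\cdot H_x = \{(g^M)''B^M : B^M \in H_x\}$.

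The key observation is that for any Borel set $B^M \in \C$, the Borel set $(g^M)''B^M$, computed inside $M$, has $\V$-interpretation equal to $g(B^\V)$; this is exactly Lemma \ref{lem:inj_borel_abs}(ii) applied to the code of $(g^M)''B^M$ in $M$, using that the code of $g$, of $B$, and of the image all lie in $M$. Given this, the forward inclusion $(g^M)\cdot H_x \subseteq H_{g(x)}$ is immediate: if $B^M \in H_x$ then $x \in B^\V$, so $g(x) \in g(B^\V) = ((g^M)''B^M)^\V$, showing $(g^M)''B^M \in H_{g(x)}$. For the reverse inclusion, given $C^M \in H_{g(x)}$, let $A^M := (g^M)^{-1}{}''C^M \in M$; this is legitimate because the inverse of a meager-class preserving Borel bijection in $M$ is again of the same kind (being Borel is preserved, and the meager-class preserving hypothesis is symmetric). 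Then $(g^M)''A^M = C^M$, and absoluteness gives $A^\V = g^{-1}(C^\V)$, so $g(x) \in C^\V$ yields $x \in A^\V$, i.e.\ $A^M \in H_x$, and therefore $C^M \in (g^M)\cdot H_x$.

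The only potentially tricky point is bookkeeping of codes: one must be careful that $(g^M)''B^M$ denotes the image taken in $M$ (so that it is an element of $\C$), while its membership relation with $g(x)$ is measured in $\V$. Once the absoluteness of injective Borel images is in hand, this is a purely formal verification, so I expect no real obstacle beyond keeping the $M$-versus-$\V$ interpretations straight.
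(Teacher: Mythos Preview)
Your proposal is correct and rests on the same key idea as the paper's proof, namely the absoluteness of injective Borel images (Lemma \ref{lem:inj_borel_abs}(ii)), which lets you identify $((g^M)''B^M)^\V$ with $g(B^\V)$.

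The only structural difference is that you verify both containments $(g^M)\cdot H_x \subseteq H_{g(x)}$ and $H_{g(x)} \subseteq (g^M)\cdot H_x$ directly, whereas the paper argues more economically: it shows only that $g(x)$ lies in every $A$ with $A^M \in (g^M)\cdot H_x$, and then invokes the uniqueness of the generic real (the intersection $\bigcap\{A : A^M \in (g^M)\cdot H_x\}$ is a singleton since $(g^M)\cdot H_x$ is $M$-generic). This saves the second inclusion at the cost of using genericity; your version trades that for the observation that $g^{-1}$ is again a meager-class preserving Borel bijection coded in $M$. Both routes are straightforward once Lemma \ref{lem:inj_borel_abs}(ii) is in hand.
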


\begin{proof}
	Given the correspondence between $M$-generic Cohen reals and $M$-generic filters for $\C$, it suffices to prove that
	\[
		\bigcap\{A:A^M\in g^M\cdot H_x\}=\{g(x)\}.
	\]
	Let $B^M\in H_x$. Then, $x\in B$, so $g(x)\in g''B$. Let $A^M=(g^M)''B$. By Lemma \ref{lem:inj_borel_abs}(ii), $A=g''B$, and thus $g(x)\in A$. This shows that
	\[
		g(x)\in\bigcap\{A:A^M\in g^M\cdot H_x\}.
	\]
	This intersection is a singleton by the genericity of $g^M\cdot H_x$.
\end{proof}

\begin{proof}[Proof of Theorem \ref{thm:cohen_hhf}]
	In $\V$, enumerate $\Aut^M(\bar{\C})$ as $\{\gamma_n:n\in\omega\}$. 
	
	For each $n$, apply Theorem \ref{thm:realize_auts} in $M$ to find a meager-class preserving Borel bijection $f_n^M$ of $(2^\omega)^M$ which induces the automorphism $\gamma_n$ on $\bar{\C}$. Let $\Gamma_n$ be an abstract group in $M$ which is isomorphic to that generated by $\{f_0^M,\ldots,f_{n-1}^M\}$. We use $\Gamma_n$, rather than the group generated by $\{f_0^M,\ldots,f_{n-1}^M\}$, to simplify matters when passing to $\V$. This induces a Borel action of $\Gamma_n$ on $(2^\omega)^M$; let $E_n^M$ be the induced orbit equivalence relation. Note that while each $f_n^M$ and $\Gamma_n$ is in $M$, their enumeration is not.
		
	For each $n$, apply Theorem \ref{thm:gen_hf} in $M$ to obtain a comeager Borel set $C_n^M$ on which $E_n^M$ is hyperfinite.
	
	In $\V$, let $C=\bigcap_{n\in\omega} C_n$, which is comeager by the absoluteness of meagerness (Lemma II.1.6.6 in \cite{MR0265151}), and let $E=\bigcup_{n\in\omega} (E_n\restr C)$. By Lemma \ref{lem:hf_up_abs}, $E$ is an increasing union of hyperfinite Borel equivalence relations. Since a real is in $\LC$ if and only if it is contained in every comeager Borel set coded in $M$ (cf.~\S II.2 of \cite{MR0265151}), $\LC\subseteq C$.
	
	We claim that $\equiv^\C_M$ coincides with $E\restr \LC$. Suppose that $x,y\in \LC$ are $E$-related. Then, there is some $n$ for which $xE_n y$. By Lemma \ref{lem:eqrel_abs}(iii), $\Gamma_n$ induces $E_n$, so there is a $g\in \Gamma_n$ with $y=g\cdot x$. $g$'s action on $2^\omega$ can be expressed as a word in $f_0^{\pm1},\ldots,f_{n-1}^{\pm1}$. Let $g^M$ be the corresponding word in $(f_0^M)^{\pm1},\ldots,(f_{n-1}^M)^{\pm1}$. In $M$, $g^M$ is a meager-class preserving Borel bijection $(2^\omega)^M\to(2^\omega)^M$, and thus, induces an automorphism of $\C$ (in $M$) mapping the $M$-generic filter corresponding to $x$ to that corresponding to $y$, by Lemma \ref{lem:map_Cgens}. Hence, $M[x]=M[y]$.
	
	Conversely, suppose $x,y\in \LC$ are $\equiv^\C_M$-related. Since $\bar{\C}$ is a complete Boolean algebra in $M$, we can apply Theorem \ref{thm:bool_auts} to obtain a $\gamma_k\in\Aut^M(\bar{\C})$ such that $\gamma_k$ maps the $M$-generic filter corresponding to $x$ to that corresponding $y$. By Lemma \ref{lem:map_Cgens}, $f_k(x)=y$, and so $x E_{k+1} y$.
	
	The amenability of $\equiv^\C_M$ follows from the fact that hyperfinite equivalence relations are amenable, and increasing unions of countably many amenable equivalence relations are amenable (Propositions 2.15 in \cite{MR1900547}).
\end{proof}

%

\section{Random reals}\label{sec:random}

As in \S\ref{sec:Cohen}, we will view \emph{random forcing} in its idealized form, as the set $\B$ of all non-null Borel subsets of $2^\omega$, ordered by containment, as computed in $M$. Since $\B$ also satisfies the countable chain condition, we can identify its completion $\bar{\B}$ in $M$ with the quotient of all Borel sets modulo the null ideal. Let $\LR$ be the set of $M$-generic random reals in $2^\omega$.


As in \S\ref{sec:Cohen}, if $g$ is a measure-preserving Borel bijection of $2^\omega$ coded in $M$, then by the absoluteness of Lebesgue measure (Lemma II.1.6.4 in \cite{MR0265151}), $g^M$ induces an automorphism of $\B$ (and $\bar{\B}$) by $B\mapsto (g^M)''B$. This induces an action on $M$-generic filters for $\B$ by $H\mapsto g^M\cdot H=\{(g^M)''B:B\in H\}$. The following is the analogue of Lemma \ref{lem:map_Cgens} for $\B$ and its proof is identical.

\begin{lemma}\label{lem:map_Bgens}
	Let $g:2^\omega\to 2^\omega$ be a measure-preserving Borel bijection coded in $M$, $x\in\LR$, and $H_x$ the corresponding $M$-generic filter for $\B$. Then,
	\[
		(g^M)\cdot H_x=H_{g(x)}.
	\]
	\qed
\end{lemma}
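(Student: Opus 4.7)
The plan is to repeat the argument of Lemma \ref{lem:map_Cgens} essentially verbatim, replacing ``meager-class preserving'' with ``measure-preserving'' and ``non-meager'' with ``non-null'' throughout. By the discussion immediately preceding the lemma, $g^M$ induces an automorphism of $\B$ in $M$, so the action $H\mapsto g^M\cdot H$ on $\Gen^\B_M$ is well-defined; in particular, $g^M\cdot H_x$ is an $M$-generic filter for $\B$. Using the bijection between $M$-generic filters for $\B$ and $M$-generic random reals, it suffices to prove that the random real associated with $g^M\cdot H_x$ is precisely $g(x)$, i.e., that
\[
    \bigcap\{A : A^M \in g^M\cdot H_x\} = \{g(x)\}.
\]

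For the inclusion $\supseteq$, I would take an arbitrary member of $g^M\cdot H_x$, which has the form $(g^M)''B$ for some $B^M\in H_x$. Since $x\in B$, we have $g(x)\in g''B$; and applying Lemma \ref{lem:inj_borel_abs}(ii) to the injective Borel function $g$, the $\V$-interpretation of the Borel code $(g^M)''B$ is exactly $g''B$, hence contains $g(x)$. This shows $g(x)$ lies in the displayed intersection. The reverse inclusion then follows because the intersection is already known to be a singleton, as $g^M\cdot H_x$ is $M$-generic for the idealized forcing $\B$ (this is precisely the defining property of the associated generic real, recalled in \S\ref{sec:gen_results} via Proposition 2.1.2 of \cite{MR2391923}).

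I do not expect any real obstacle here: the argument is formally identical to the Cohen case, the only substantive difference being the invocation of absoluteness of Lebesgue measure (Lemma II.1.6.4 in \cite{MR0265151}), already used by the author to justify that $g^M$ determines an automorphism of $\B$ in $M$, in place of absoluteness of meagerness. The remaining ingredients, namely the absoluteness of injective Borel images and the uniqueness of the generic real for an idealized forcing, are insensitive to whether the ambient $\sigma$-ideal is the meager ideal or the null ideal, which is why the author can legitimately declare the proof to be identical.
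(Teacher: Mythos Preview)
Your proposal is correct and is exactly what the paper does: the author states that the proof is identical to that of Lemma~\ref{lem:map_Cgens} and omits it, and you have faithfully reproduced that argument with the obvious measure-for-category substitutions.
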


\begin{lemma}\label{lem:Bern_shift_abs}
	The following notions are absolute for a countably infinite group $\Gamma$ in $M$:
	\begin{enumerate}[label=\textup{(\roman*)}]
		\item The Bernoulli shift action of $\Gamma$.
		\item The free part of the Bernoulli shift action of $\Gamma$.
	\end{enumerate}
\end{lemma}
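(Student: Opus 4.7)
The plan is to mirror the strategy used in the preceding absoluteness lemmas: express each of the two notions as a $\Pi^1_1$ predicate in suitable Borel codes, and then appeal to Mostowski's Absoluteness Theorem. The crucial point is that $\Gamma\in M$ is countable, so quantification over $\Gamma$ is absolute, and all the defining formulas involve only such quantifiers together with $\Pi^1_1$ predicates for (non-)membership and containment in Borel codes.

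First I would fix, inside $M$, a bijection $\Gamma\to\omega$ so as to identify $2^\Gamma$ with $2^\omega$; since this bijection lies in $M$, the identification, and the induced identifications of basic clopens, are absolute. Under this identification the Bernoulli shift becomes a continuous (hence Borel) function $f:\Gamma\times 2^\omega\to 2^\omega$ with $f(\gamma,x)(\delta)=x(\gamma^{-1}\delta)$.

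For (i), I would take a Borel code in $M$ for (the graph of) this $f$ as computed in $M$, and show that the assertion ``this code represents the Bernoulli shift of $\Gamma$'' is $\Pi^1_1$ in the code. It splits into two clauses: (a) the code represents the graph of a function, which is $\Pi^1_1$ in the code by the same argument used for Lemma \ref{lem:eqrel_abs}(i) and Lemma \ref{lem:inj_borel_abs}(i); and (b) for every $\gamma,\delta\in\Gamma$ and all $x,y\in 2^\omega$, if $((\gamma,x),y)$ lies in the graph then $y(\delta)=x(\gamma^{-1}\delta)$. Since the only quantifiers over reals are universal predicates for membership/non-membership in the Borel code (which are $\Pi^1_1$), and quantification over $\Gamma$ is absolute, the whole conjunction is $\Pi^1_1$ in the code, and Mostowski's Absoluteness Theorem gives (i).

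For (ii), using (i) I may assume the shift action is represented by the same Borel function in $M$ and $\V$. The free part is then
\[
(2)^\Gamma=\{x\in 2^\omega:\forall\gamma\in\Gamma(\gamma\neq 1\Rightarrow\exists\delta\in\Gamma,\,x(\gamma^{-1}\delta)\neq x(\delta))\},
\]
a Borel set whose defining formula ranges only over the countable set $\Gamma\in M$ and over individual bits of $x$. Taking a Borel code in $M$ for this set, the property that the code represents exactly $(2)^\Gamma$ is again expressible by a conjunction of $\Pi^1_1$ statements in the code (quantifying $x$ universally over both membership and non-membership in the code), so another application of Mostowski's Absoluteness Theorem finishes (ii). I do not expect a substantial obstacle: all the heavy lifting (absoluteness of Borel codes, of being a function, of membership) has already been packaged into Lemmas \ref{lem:inj_borel_abs} and \ref{lem:eqrel_abs}, and the remaining work is the routine translation of the definitions of the shift action and its free part into $\Pi^1_1$ form.
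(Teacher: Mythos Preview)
Your proposal is correct and follows essentially the same approach as the paper: both arguments observe that the defining conditions for the Bernoulli shift and its free part are given by formulas quantifying only over the countable group $\Gamma$ and individual bits of $x$, hence are absolute between $M$ and $\V$. The paper is slightly terser, noting directly that these conditions are arithmetic in the multiplication table of $\Gamma$ (so even below $\Pi^1_1$), whereas you package things via $\Pi^1_1$ predicates in Borel codes and Mostowski absoluteness; this is a cosmetic difference, not a substantive one.
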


\begin{proof}
	(i) Let $s^M:\Gamma\times (2^\Gamma)^M\to(2^\Gamma)^M$ be the Bernoulli shift of $\Gamma$, as computed in $M$. Then, $s^M(\gamma,x)=y$ if and only if $y(\delta)=(\gamma^{-1}\delta)$ for all $\delta\in\Gamma$, which is arithmetic in the multiplication table of $\Gamma$. Thus, we can express that $s$ is the Bernoulli shift of $\Gamma$ as a $\Pi^1_1$ predicate in a code for $s$, using a real which codes $\Gamma$ and its multiplication operation.
	
	(ii) Likewise, the definition of the free part of the Bernoulli shift is also arithmetic in the multiplication table of $\Gamma$.
\end{proof}

\begin{thm}\label{thm:randoms_shift}
	Suppose that $\Gamma$ is a countably infinite group in $M$. Identifying $2^\omega$ with $2^\Gamma$, $\LR$ is a $\Gamma$-invariant conull Borel subset of the free part of the Bernoulli shift action of $\Gamma$ and $F(\Gamma,2)\restr\LR\;\subseteq\;\equiv^\B_M$.
\end{thm}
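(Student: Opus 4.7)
The plan is to verify each assertion using Lemma \ref{lem:map_Bgens} as the main tool. The key observation is that, for every $\gamma\in\Gamma$, the Bernoulli shift map $x\mapsto\gamma\cdot x$ is a measure-preserving Borel bijection of $2^\Gamma$ (indeed a homeomorphism, being a coordinate permutation), whose code is arithmetic in the multiplication table of $\Gamma$ (cf.~Lemma \ref{lem:Bern_shift_abs}) and therefore lies in $M$. By absoluteness of measure-preservation and of being a Borel bijection (Lemma II.1.6.4 in \cite{MR0265151} together with Lemma \ref{lem:inj_borel_abs}), the shift computed in $M$ is itself a measure-preserving Borel bijection in $M$, and so induces an automorphism of $\B$ in $M$.

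With this setup, $\Gamma$-invariance and the containment $F(\Gamma,2)\restr\LR\;\subseteq\;\equiv^\B_M$ are immediate. Given $x\in\LR$ and $\gamma\in\Gamma$, write $g$ for the shift by $\gamma$; then $g^M$ sends the $M$-generic filter $H_x$ to another $M$-generic filter, and Lemma \ref{lem:map_Bgens} identifies this filter as $H_{\gamma\cdot x}$, so $\gamma\cdot x\in\LR$. If moreover $y=\gamma\cdot x$ with $x,y\in\LR$, the same application gives $H_y = g^M\cdot H_x$, and since automorphisms of $\B$ preserve generic extensions (Corollary VII.7.6 of \cite{MR597342}), $M[H_x]=M[H_y]$ and hence $x\equiv^\B_M y$.

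That $\LR$ is Borel is Lemma \ref{lem:idealized_Borel}, and conullness is immediate, since $\LR$ is a countable intersection of conull Borel sets coded in $M$. To see $\LR$ is contained in the free part, fix $\gamma\neq 1$ and consider the Borel set $F_\gamma=\{x\in 2^\Gamma:\gamma\cdot x=x\}$. The condition $\gamma\cdot x=x$ is equivalent to $x$ being constant on each orbit of the left-multiplication action of $\langle\gamma\rangle$ on $\Gamma$; each such orbit has size equal to the order of $\gamma$ (hence $\geq 2$), and since $\Gamma$ is infinite there are infinitely many such orbits. A direct product-measure computation then gives that $F_\gamma$ is Lebesgue null. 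By absoluteness of nullity for Borel codes (Lemma II.1.6.4 of \cite{MR0265151}), $F_\gamma$ is null in $M$, and since $\Gamma$ is countable in $M$, so is $\bigcup_{\gamma\neq 1} F_\gamma$. Every $M$-generic random real avoids every null Borel set coded in $M$, so $\LR$ is disjoint from this union, i.e., $\LR$ lies in the free part.

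The only step with genuine content, as opposed to bookkeeping, is the measure-theoretic calculation that each $F_\gamma$ is null; this is the main potential obstacle, but it is a standard fact about Bernoulli shifts. Everything else follows by combining Lemma \ref{lem:map_Bgens}, the absoluteness lemmas (\ref{lem:inj_borel_abs}, \ref{lem:Bern_shift_abs}), and the fact that $M$-generic random reals avoid null Borel sets coded in $M$.
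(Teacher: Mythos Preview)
Your proof is correct and follows essentially the same approach as the paper: both use Lemma \ref{lem:map_Bgens} to establish $\Gamma$-invariance of $\LR$ and the containment $F(\Gamma,2)\restr\LR\subseteq\;\equiv^\B_M$, and both argue that $\LR$ lies in the free part by showing the complement of $(2)^\Gamma$ is null and coded in $M$. The only cosmetic difference is that the paper packages the free-part argument via Lemma \ref{lem:Bern_shift_abs}(ii) (absoluteness of the free part as a coded Borel set), whereas you compute the nullity of each fixed-point set $F_\gamma$ directly; one small slip in your phrasing is that ``infinitely many orbits'' need not hold when $\gamma$ has infinite order, but in that case each orbit is infinite and the product-measure computation still gives zero, so the conclusion stands.
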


\begin{proof}
	We identify (in $M$ and $\V$) $2^\Gamma$ with $2^\omega$ via some fixed bijection $\Gamma\to\omega$ in $M$, and thus $\B$ becomes the set of all Borel subsets of $2^\Gamma$ having positive measure in $M$, and $\LR$ a Borel subset of $2^\Gamma$ in $\V$.

	By Lemma \ref{lem:Bern_shift_abs}(i), the shift action of each $\gamma\in\Gamma$ is a measure-preserving Borel bijection of $2^\Gamma$ coded in $M$.
	
	We claim that $\LR$ is $\Gamma$-invariant, that is, for every $x\in 2^\Gamma$ and $\gamma\in \Gamma$, if $x$ is random over $M$, then so is $\gamma\cdot x$. To this end, suppose that $x\in\LR$ and let $\gamma\in \Gamma$. Let $H_x$ be the corresponding $M$-generic filter for $\B$. By Lemma \ref{lem:map_Bgens}, $(\gamma^M)\cdot H_x=H_{\gamma\cdot x}$, and so $\gamma\cdot x$ is the random real corresponding to the $M$-generic filter $\gamma^M\cdot H_x$.
	
	Observe that if $x\in\LR$ and $\gamma\in\Gamma$, then Lemma \ref{lem:map_Bgens} also shows that $M[x]=M[\gamma\cdot x]$. Thus, the restriction of $E(\Gamma,2)\restr\LR$ is contained in $\equiv^\B_M$.
	
	Lastly, we claim that $\LR$ is contained in the free part the Bernoulli shift action, $B=(2)^\Gamma$. By Lemma \ref{lem:Bern_shift_abs}(ii), $B$ is coded correctly in $M$, and so $B^M$ is conull in $M$ by the absoluteness of Lebesgue measure (Lemma II.1.6.4 in \cite{MR0265151}). As a real is in $\LR$ exactly when it is contained in every conull Borel set coded in $M$ (Theorem II.2.6 in \cite{MR0265151}), we have that $\LR\subseteq B=(2)^\Gamma$. Note that $\LR$ itself is conull, again by the absoluteness of Lebesgue measure.
\end{proof}

\begin{thm}\label{thm:random_not_amen}
	$\equiv^\B_M$ is not amenable. In particular, $\equiv^\B_M$ is not hyperfinite.	
\end{thm}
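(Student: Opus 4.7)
The plan is to leverage Theorem \ref{thm:randoms_shift} with $\Gamma = F_2$, the free group on two generators (which lies in $M$ by absoluteness of finite presentations), and then invoke the non-amenability of the free-part Bernoulli shift of $F_2$ combined with the fact that subequivalence relations of amenable countable Borel equivalence relations are amenable (Proposition 2.14 of \cite{MR1900547}).

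First, I would apply Theorem \ref{thm:randoms_shift} to $F_2$: identifying $2^\omega$ with $2^{F_2}$, this gives that $\LR$ is an $F_2$-invariant conull Borel subset of the free part $(2)^{F_2}$ of the Bernoulli shift action, and that
\[
F(F_2,2)\restr\LR \;\subseteq\; \equiv^\B_M.
\]
Next, I would argue that $F(F_2,2)\restr\LR$ is not amenable. The restriction to $\LR$ is the orbit equivalence relation of the free, measure-preserving Bernoulli action of $F_2$ restricted to the $F_2$-invariant conull Borel set $\LR$; equivalently, it differs from $F(F_2,2)$ only on a null invariant Borel set. Since $F(F_2,2)$ is known to be non-amenable (cf.~the discussion in the introduction, e.g., Proposition 1.7 of \cite{MR1900547}), and amenability of a countable Borel equivalence relation carrying an invariant probability measure is preserved under throwing away null invariant Borel pieces, the restriction $F(F_2,2)\restr\LR$ is non-amenable as well.

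Finally, since $F(F_2,2)\restr\LR$ is a subequivalence relation of $\equiv^\B_M\restr\LR=\,\equiv^\B_M$, and amenability is inherited by subequivalence relations, the non-amenability of $F(F_2,2)\restr\LR$ forces $\equiv^\B_M$ itself to be non-amenable. Non-hyperfiniteness then follows at once, since every hyperfinite Borel equivalence relation is amenable.

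The main obstacle is essentially already resolved in Theorem \ref{thm:randoms_shift}; once that containment is in hand, the present result is a direct consequence of standard facts about amenability of Bernoulli shift actions of non-amenable groups and hereditary behavior of amenability. The only point requiring any care is confirming that passing from $F(F_2,2)$ on $(2)^{F_2}$ to its restriction on the conull invariant set $\LR$ does not destroy non-amenability, which is immediate from the measure-preserving, free action viewpoint.
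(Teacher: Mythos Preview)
Your proposal is correct and follows essentially the same route as the paper: apply Theorem~\ref{thm:randoms_shift} with $\Gamma=F_2$, note that $F(F_2,2)\restr\LR$ is non-amenable (the paper phrases this directly as ``free measure-preserving action of a non-amenable group,'' citing Proposition~2.14 of \cite{MR1900547}, rather than restricting from the full $F(F_2,2)$, but the content is the same), and conclude via hereditariness of amenability under subequivalence relations. One bibliographic slip: the hereditariness of amenability under subequivalence relations is Proposition~2.15, not 2.14, in \cite{MR1900547}.
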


\begin{proof}
	Take $\Gamma=F_2$, which is clearly in $M$. By Theorem \ref{thm:randoms_shift}, $\Gamma$ acts in a free, measure preserving way on the conull set $\LR\subseteq 2^\Gamma$ of $M$-generic random reals. Since $\Gamma$ is a non-amenable group, any free, measure-preserving Borel action of $\Gamma$ on a standard probability space yields a non-amenable orbit equivalence relation (Proposition 2.14 in \cite{MR1900547}). Hence, $F(\Gamma,2)\restr\LR$ is not amenable. Since amenability is inherited by subequivalence relations (Proposition 2.15 in \cite{MR1900547}), $\equiv^\B_M$ is not amenable, and thus not hyperfinite, either.
\end{proof}


Amenability is inherited downwards via Borel reductions (Proposition 2.15 in \cite{MR1900547}), so Theorems \ref{thm:cohen_hhf} and \ref{thm:random_not_amen} imply:

\begin{cor}\label{cor:random_nleq_cohen}
	$\equiv^\B_M\;\not\leq_B\;\equiv^\C_M$.\qed
\end{cor}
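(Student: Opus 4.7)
The plan is a direct proof-by-contradiction combining the two main theorems of Sections~\ref{sec:Cohen} and~\ref{sec:random} with the functorial behavior of amenability under Borel reductions. Suppose toward a contradiction that $\equiv^\B_M\;\leq_B\;\equiv^\C_M$. By Theorem~\ref{thm:cohen_hhf}, $\equiv^\C_M$ is an increasing union of countably many hyperfinite Borel equivalence relations; each hyperfinite relation is amenable, and the class of amenable countable Borel equivalence relations is closed under countable increasing unions (Proposition~2.15 in \cite{MR1900547}), so $\equiv^\C_M$ is amenable.

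Next I would invoke the downward closure of amenability under Borel reducibility (also Proposition~2.15 in \cite{MR1900547}): if $E\leq_B F$ and $F$ is amenable, then $E$ is amenable. Applied to the assumed reduction $\equiv^\B_M\;\leq_B\;\equiv^\C_M$, this forces $\equiv^\B_M$ to be amenable as well. But this directly contradicts Theorem~\ref{thm:random_not_amen}, which asserts that $\equiv^\B_M$ is not amenable. Hence no such Borel reduction can exist.

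There is no real obstacle here, as both ingredients are already in hand and the only content of the corollary is to record the formal consequence; I would keep the proof to essentially one sentence, citing Theorems~\ref{thm:cohen_hhf} and \ref{thm:random_not_amen} together with Proposition~2.15 of \cite{MR1900547}.
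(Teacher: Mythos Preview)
Your proposal is correct and matches the paper's own argument essentially verbatim: the corollary is recorded immediately after the sentence ``Amenability is inherited downwards via Borel reductions (Proposition~2.15 in \cite{MR1900547}), so Theorems~\ref{thm:cohen_hhf} and~\ref{thm:random_not_amen} imply,'' which is exactly the one-line proof you describe.
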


Next, we will make use of results for actions of Kazdahn groups. We will not define the Kazdahn property here (see \cite{MR776417}), as for our applications, only one such group is needed, namely $\SL_3(\Z)$. 

A Borel homomorphism $f$ from equivalence relations $E$ to $F$, where $E$ lives on a standard measure space $(X,\mu)$, is \emph{$\mu$-trivial} if there is a $\mu$-conull Borel set $C\subseteq X$ which $f$ maps to a single $F$-class. If no such set exists, it is \emph{$\mu$-nontrivial}. The following theorem is a consequence of results in \cite{MR1047300}:

\begin{thm}[Hjorth--Kechris, Theorem 10.5 in \cite{MR1423420}]\label{thm:HK_not_treeable}
	Let $\Gamma$ be a countable Kazhdan group which acts in an ergodic measure-preserving Borel way on a standard probability space $(X,\mu)$. If $F$ is a treeable countable Borel equivalence relation, then every Borel homomorphism from $E$ to $F$ is $\mu$-trivial.
\end{thm}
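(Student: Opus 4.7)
The plan is to argue by contradiction: assume $\phi:X\to Y$ is a $\mu$-nontrivial Borel homomorphism from $E$, the orbit equivalence relation of $\Gamma\curvearrowright(X,\mu)$, to a treeable countable Borel equivalence relation $F$ on $Y$, and derive a violation of Kazhdan's property (T). The strategy has three steps: use a Borel treeing of $F$ to manufacture a measurable cocycle $\alpha:\Gamma\times X\to\Lambda$ valued in a countable free group $\Lambda$; interpret $\alpha$ as a measurable $\Gamma$-action on the Cayley (Bass--Serre) tree $\mathcal{T}_\Lambda$ of $\Lambda$; and invoke Serre's theorem that countable Kazhdan groups have property (FA) to trivialize $\alpha$, forcing $\phi$ to be $\mu$-trivial.

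First I would fix a Borel treeing $T$ of $F$ and observe that for each $\gamma\in\Gamma$ and $\mu$-a.e.\ $x\in X$, the points $\phi(x)$ and $\phi(\gamma\cdot x)$ lie in a common $F$-class and so are joined by a unique reduced $T$-path $\pi_\gamma(x)$. After choosing, Borel measurably, a basepoint in each $F$-class together with a labelling of the edges of $T$, I read off from $\pi_\gamma(x)$ an element $\alpha(\gamma,x)\in\Lambda$ in a countable free group on the edge labels. Concatenation of tree paths translates into the cocycle identity $\alpha(\gamma\delta,x)=\alpha(\gamma,\delta\cdot x)\,\alpha(\delta,x)$, and $\alpha$ is a measurable coboundary precisely when $\phi$ pushes a conull set into a single $F$-class; so it suffices to show that $\alpha$ is a coboundary.

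Next I would promote $\alpha$ to a measurable $\Gamma$-action on $X\times V(\mathcal{T}_\Lambda)$ via $\gamma\cdot(x,v)=(\gamma\cdot x,\alpha(\gamma,x)v)$. By Serre's theorem, any action of a countable Kazhdan group on a simplicial tree has a global fixed vertex; applying this fibrewise, together with a Borel selector for fixed vertices (which exists because for each $x$ the fixed set is a nonempty Borel subset of the countable vertex set of $\mathcal{T}_\Lambda$), yields a measurable $\Gamma$-equivariant section $s:X\to V(\mathcal{T}_\Lambda)$. This section is precisely a trivialization $s(\gamma\cdot x)=\alpha(\gamma,x)\,s(x)$ of $\alpha$. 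Combined with ergodicity of $E$ and the measurable labelling from the first step, it pins $\phi$ down to a single $F$-class on a conull set, contradicting $\mu$-nontriviality.

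The main obstacle I expect is the cocycle construction: performing the basepoint and edge-labelling choices in a genuinely Borel way and uniformly across $F$-classes is delicate, and a naive selection may fail to yield a group-valued cocycle. A cleaner implementation, which I would adopt in a serious write-up, works with the measurable groupoid of the treeing $T$ rather than a single fixed free group, and with property (FA) reformulated for groupoid cocycles; this is the line pioneered by Adams and used by Hjorth--Kechris in \cite{MR1423420}.
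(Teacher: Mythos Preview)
The paper does not prove this theorem; it is quoted from Hjorth--Kechris \cite{MR1423420} (itself drawing on Adams \cite{MR1047300}) and used as a black box. So there is no ``paper's proof'' to compare against, and the question is only whether your sketch stands on its own.

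Your outline has the right architecture---treeing $\Rightarrow$ cocycle into a free group $\Rightarrow$ exploit property~(T) to trivialize---but the crucial middle step is broken as written. You say: promote $\alpha$ to a $\Gamma$-action on $X\times V(\mathcal{T}_\Lambda)$ and then apply Serre's property (FA) ``fibrewise''. The action $\gamma\cdot(x,v)=(\gamma\cdot x,\alpha(\gamma,x)v)$ is a genuine $\Gamma$-action, but it does \emph{not} preserve the fibres $\{x\}\times V(\mathcal{T}_\Lambda)$; it carries the fibre over $x$ to the fibre over $\gamma\cdot x$. There is no $\Gamma$-action on any single tree here, so property (FA) has nothing to bite on. A ``Borel selector for fixed vertices'' presupposes that the fixed-point set in each fibre is nonempty, which is exactly what you have not established.

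What Adams actually does is analytic, not combinatorial: the tree metric on $\mathcal{T}_\Lambda$ is a conditionally negative definite kernel, so $\psi(\gamma)=\int_X d_{\mathcal{T}_\Lambda}(v_0,\alpha(\gamma,x)v_0)\,d\mu(x)$ is a conditionally negative definite function on $\Gamma$. By the Delorme--Guichardet characterization of property~(T), $\psi$ is bounded; a circumcenter (barycentre) argument on the tree then produces the measurable $\Gamma$-equivariant section you want. Your final paragraph gestures toward this with the reference to Adams and the groupoid formulation, but the body of the sketch does not contain the essential idea, and property (FA) alone, applied as you describe, will not close the gap.
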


\begin{thm}\label{thm:randoms_not_treeable}
	$\equiv^\B_M$ is not treeable.
\end{thm}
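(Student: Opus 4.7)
The plan is to apply Theorem \ref{thm:HK_not_treeable} to a suitable Kazhdan group action on $\LR$, using Theorem \ref{thm:randoms_shift} to place such an action inside $\equiv^\B_M$. The natural choice is $\Gamma = \SL_3(\Z)$, which lies in $M$ and is a Kazhdan group.

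First I would invoke Theorem \ref{thm:randoms_shift} with this $\Gamma$: identifying $2^\omega$ with $2^\Gamma$, the set $\LR$ is a $\Gamma$-invariant conull Borel subset of the free part $(2)^\Gamma$, the Bernoulli shift action of $\Gamma$ restricted to $\LR$ is free and measure-preserving with respect to the product (Lebesgue) measure $\mu$, and $F(\Gamma,2)\restr\LR \;\subseteq\; \equiv^\B_M$. The Bernoulli shift of any infinite countable group on $2^\Gamma$ is ergodic (in fact mixing) with respect to $\mu$, and this ergodicity is inherited by the restriction to the $\Gamma$-invariant conull set $\LR$.

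Suppose toward a contradiction that $\equiv^\B_M$ is treeable. Then the identity map $\LR \to \LR$ is a Borel homomorphism from $F(\Gamma,2)\restr\LR$ to the treeable countable Borel equivalence relation $\equiv^\B_M$. By Theorem \ref{thm:HK_not_treeable}, applied to the free, ergodic, measure-preserving action of the Kazhdan group $\Gamma$ on $(\LR,\mu\restr\LR)$, this homomorphism must be $\mu$-trivial: there is a $\mu$-conull Borel set $C \subseteq \LR$ which is mapped into a single $\equiv^\B_M$-class. But $\equiv^\B_M$ is a countable Borel equivalence relation, so every class is countable, hence $\mu$-null, contradicting $\mu(C) = 1$.

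I do not anticipate a serious obstacle here, as all the substantive work is already packaged into the results cited: Theorem \ref{thm:randoms_shift} provides the embedding of the free part of a Bernoulli shift into $\equiv^\B_M$, and Theorem \ref{thm:HK_not_treeable} is the black-box Kazhdan obstruction to treeability. The only minor points to verify are that $\SL_3(\Z) \in M$ (clear, since $M \models \ZFC$), that $\SL_3(\Z)$ is Kazhdan (standard, cf.~\cite{MR776417}), and that ergodicity of the Bernoulli shift passes to the conull invariant set $\LR$, which is automatic.
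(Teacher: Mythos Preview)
Your proof is correct and follows essentially the same approach as the paper: both choose $\Gamma=\SL_3(\Z)$, invoke Theorem~\ref{thm:randoms_shift} to realize $F(\Gamma,2)\restr\LR$ inside $\equiv^\B_M$, and then apply Theorem~\ref{thm:HK_not_treeable}. The only cosmetic difference is that the paper first concludes $F(\Gamma,2)\restr\LR$ is not treeable and then invokes the fact that treeability passes to subequivalence relations, whereas you apply Theorem~\ref{thm:HK_not_treeable} directly to the identity homomorphism into $\equiv^\B_M$; your route is slightly more direct and avoids that extra citation.
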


\begin{proof}
	Let $\Gamma=\SL_3(\Z)$, which is clearly in $M$. Theorem \ref{thm:HK_not_treeable} implies that $F(\Gamma,2)\restr\LR$ is not treeable, where $\LR$ is the conull set of $M$-generic random reals in $2^\Gamma$. By Theorem \ref{thm:randoms_shift}, $F(\Gamma,2)\restr\LR\subseteq\;\equiv^\B_M$, and since treeability is inherited by subequivalence relations (Proposition 3.3 in \cite{MR1900547}), $\equiv^\B_M$ is not treeable either.
\end{proof}

The next theorem we need is an application of results from \cite{MR2342637}:

\begin{thm}[Thomas, Theorem 3.6 in \cite{MR2500091}]\label{thm:thomas_not_free}
	Let $\Delta = \SL_3(\Z)\times S$, where $S$ is any countable group. Suppose that $\Gamma$ is a countable group that acts in a free Borel way on a standard Borel space, with orbit equivalence relation $E$. If there exists a $\mu$-nontrivial Borel homomorphism from $E(\Delta,2)$ to $E$, then there exists a group homorphism $\pi :\Delta\to\Gamma$ with finite kernel.
\end{thm}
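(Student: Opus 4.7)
The plan is to convert the Borel homomorphism into a cocycle and then invoke Popa's cocycle superrigidity theorem for Bernoulli actions of groups containing a Kazhdan factor. Let $X$ denote the standard Borel space on which $\Gamma$ acts freely, and let $f:(2)^\Delta\to X$ be the $\mu$-nontrivial Borel homomorphism (where $\mu$ is the product measure on $2^\Delta$; we may discard a null set and restrict to the free part of the Bernoulli shift). Since the $\Gamma$-action producing $E$ is free, for each $\delta\in\Delta$ and $\mu$-a.e.\ $x\in(2)^\Delta$ there is a unique $\alpha(\delta,x)\in\Gamma$ satisfying $f(\delta\cdot x)=\alpha(\delta,x)\cdot f(x)$. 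Standard Borel-selector arguments make $\alpha:\Delta\times(2)^\Delta\to\Gamma$ a Borel cocycle.

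Next, I would appeal to Popa's cocycle superrigidity theorem: since $\Delta=\SL_3(\Z)\times S$ contains the Kazhdan group $\SL_3(\Z)$ as a direct factor, $\Delta$ is w-rigid, so every Borel cocycle from the Bernoulli shift $\Delta\curvearrowright(2^\Delta,\mu)$ to an arbitrary countable target group is cohomologous to a group homomorphism. Thus there exist a Borel map $\varphi:(2)^\Delta\to\Gamma$ and a group homomorphism $\pi:\Delta\to\Gamma$ such that, $\mu$-almost everywhere,
\[
\alpha(\delta,x)=\varphi(\delta\cdot x)\,\pi(\delta)\,\varphi(x)^{-1}.
\]
Setting $f'(x)=\varphi(x)^{-1}\cdot f(x)$ yields a Borel map which, a.e., satisfies $f'(\delta\cdot x)=\pi(\delta)\cdot f'(x)$. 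Note $f'$ is still $\mu$-nontrivial, since $f$ and $f'$ have $\Gamma$-orbit-equivalent images pointwise.

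Finally, I would show that $\ker(\pi)$ must be finite. Suppose for contradiction $K=\ker(\pi)$ is infinite. The Bernoulli action of any infinite subgroup of $\Delta$ on $(2^\Delta,\mu)$ is mixing, hence ergodic on $K$; but $f'$ is $K$-invariant a.e., so $f'$ is $\mu$-a.e.\ equal to a single point $x_0\in X$. Then $\mu$-a.e.\ $x$ satisfies $f(x)=\varphi(x)\cdot x_0$, placing the image of $f$ in the single $\Gamma$-orbit of $x_0$, contradicting the $\mu$-nontriviality of $f$. Hence $\ker(\pi)$ is finite, and $\pi:\Delta\to\Gamma$ is the desired homomorphism.

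The main obstacle is the invocation of cocycle superrigidity: one must verify that Popa's theorem applies to cocycles taking values in an arbitrary countable group $\Gamma$ (not just a Polish or compact one) and that the Bernoulli shift hypothesis is met for the product group $\Delta$ via its Kazhdan factor. Once that is in hand, the rest is a routine ergodicity argument that trades nontriviality of $f$ against triviality of $\ker(\pi)$.
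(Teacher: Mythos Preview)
The paper does not give its own proof of this theorem; it is quoted from Thomas \cite{MR2500091} and used as a black box in the proof of Theorem~\ref{thm:randoms_not_Mfree}. Your sketch is correct and is essentially Thomas's argument: build a cocycle from the homomorphism using freeness of the $\Gamma$-action, apply Popa's cocycle superrigidity (valid here since $\SL_3(\Z)$ is an infinite Kazhdan normal subgroup of $\Delta$ and countable discrete targets lie in Popa's class $\mathcal{U}_{\mathrm{fin}}$) to untwist to a homomorphism $\pi$, and then use that any infinite subgroup of $\Delta$ acts ergodically on the Bernoulli shift to force $\ker\pi$ finite. The only point to be careful with is exactly the one you flag, namely that Popa's theorem covers arbitrary countable target groups; this is indeed the case, and Thomas's paper makes this explicit.
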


The proof of the following is modeled on that of Theorem 3.9 in \cite{MR2500091}.

\begin{thm}\label{thm:randoms_not_Mfree}
	$\equiv^\B_M$ is not Borel reducible to the orbit equivalence relation induced by a free action of any countable group $\Gamma$ in $M$.
\end{thm}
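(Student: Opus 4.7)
The plan is to argue by contradiction, adapting the proof of Theorem 3.9 in \cite{MR2500091}. Suppose $f : \LR \to X$ is a Borel reduction of $\equiv^\B_M$ to the orbit equivalence relation $E$ of a free Borel action of a countable group $\Gamma \in M$ on a standard Borel space $X$.

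For any countable group $S \in M$, set $\Delta = \SL_3(\Z) \times S$, which again lies in $M$. By Theorem \ref{thm:randoms_shift} applied to $\Delta$ (identifying $2^\omega$ with $2^\Delta$), $\LR$ is a $\Delta$-invariant conull subset of the free part of the Bernoulli shift on $2^\Delta$, and $F(\Delta,2) \restr \LR \;\subseteq\; \equiv^\B_M$. Extend $f$ to a Borel map $\tilde f : 2^\Delta \to X$ by setting $\tilde f = f$ on $\LR$ and $\tilde f \equiv x_0$ off $\LR$, for some fixed $x_0 \in X$. The $\Delta$-invariance of $\LR$ together with the above containment imply that $\tilde f$ is a Borel homomorphism from $E(\Delta,2)$ to $E$. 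Moreover, since $f$ reduces the CBER $\equiv^\B_M$ to the CBER $E$, the preimage $\tilde f^{-1}([y])$ of each $E$-class is the union of at most one countable $\equiv^\B_M$-class with the null set $2^\Delta\setminus\LR$, hence is null; so $\tilde f$ is $\mu$-nontrivial with respect to the product measure on $2^\Delta$. Theorem \ref{thm:thomas_not_free} then yields, for each such $S$, a group homomorphism $\pi_S : \SL_3(\Z) \times S \to \Gamma$ with finite kernel.

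The crux of the argument is to exhibit a single $S \in M$ for which no such $\pi_S$ can exist. Restricting $\pi_S$ to $\{1\}\times S$ gives a homomorphism $S \to \Gamma$ with finite kernel whose image commutes with $\pi_S(\SL_3(\Z))$; hence $S$ embeds, modulo a finite normal subgroup, into the centralizer in $\Gamma$ of a copy of $\SL_3(\Z)$ (or its quotient by its center). Following the construction in Theorem 3.9 of \cite{MR2500091}, one selects $S \in M$ whose structural invariants (e.g., torsion behavior, simplicity, or generation properties) preclude any such embedding into the fixed countable group $\Gamma$. Since these invariants are arithmetic in the multiplication tables of $S$ and $\Gamma$ and their subgroups, they are absolute between $M$ and $\V$, so an obstruction obtained in $M$ remains valid in $\V$, yielding the desired contradiction.
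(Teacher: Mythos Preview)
Your overall architecture matches the paper's: assume a reduction to a free $\Gamma$-action, invoke Theorem~\ref{thm:randoms_shift} for a suitable $\Delta = \SL_3(\Z)\times S$ with $S\in M$, and apply Theorem~\ref{thm:thomas_not_free} to produce a homomorphism $\Delta\to\Gamma$ with finite kernel, then derive a contradiction from the choice of $S$. Your handling of the extension of $f$ to $\tilde f$ and the verification of $\mu$-nontriviality is in fact more explicit than the paper's.

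The gap is in your final paragraph. You never actually specify $S$; you only gesture at ``structural invariants'' and a detour through centralizers, deferring the real content to Thomas's paper. The paper's choice is concrete and much simpler than your sketch suggests: since $\Gamma$ is countable, it has only countably many finitely generated subgroups, while (in $M$) there are uncountably many pairwise non-isomorphic finitely generated groups, so one can pick a finitely generated $L\in M$ that does \emph{not} embed into $\Gamma$. Set $S = L * \Z$. Then neither $S$ nor $\SL_3(\Z)$ has a nontrivial finite normal subgroup, and hence neither does $\Delta = \SL_3(\Z)\times S$ (a finite normal subgroup of a product projects to a finite normal subgroup of each factor). Thus any $\pi:\Delta\to\Gamma$ with finite kernel is in fact an embedding, contradicting that $L\leq\Delta$ does not embed into $\Gamma$. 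Non-embeddability of $L$ into $\Gamma$ is a $\Pi^1_1$ statement in their multiplication tables and is therefore absolute, so the obstruction found in $M$ persists in $\V$. Your centralizer analysis is unnecessary, and without a specific $S$ the argument is incomplete.
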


\begin{proof}
	Let $\Gamma$ be a countably infinite group in $M$. Suppose, towards a contradiction, that $\equiv^\B_M$ is Borel reducible to the orbit equivalence relation $E$ of a free Borel action of $\Gamma$ on some standard Borel space. We remark that $E$ need not be coded in $M$.
	
	Working in $M$, as there are uncountably many finitely generated groups, there exists a finitely generated group $L$ which does not embed into $\Gamma$. Let $S$ be the free product $L\ast\Z$ and let $\Delta=\SL_3(\Z)\times S$. Then, $\Delta$ has no nontrivial finite normal subgroups and does not embed into $\Gamma$. The non-existence of such an embedding is absolute and thus applies in $\V$.
	
	By Theorem \ref{thm:randoms_shift}, $F(\Delta,2)\restr\LR\subseteq\;\equiv^\B_M$, where $\LR$ is the conull set of $M$-generic random reals in $2^\Delta$. This induces a $\mu$-nontrivial homomorphism from $F(\Delta,2)$ to $E$, and thus, by Theorem \ref{thm:thomas_not_free}, an embedding of $\Delta$ into $\Gamma$, a contradiction.
\end{proof}

We note that Theorem \ref{thm:randoms_not_Mfree} implies Theorem \ref{thm:randoms_not_treeable}, since $F(F_2,2)$ is universal for all treeable Borel equivalence relations (Theorem 3.17 in \cite{MR1900547}).


Lastly, we turn to the question of whether $\equiv^\B_M$ is a universal countable Borel equivalence relation. We will employ two results about universality that are conditional on the following conjecture of Andrew Marks:

\begin{marks_conj}[Conjecture 1.4 in \cite{MR3651212}]
	A countable Borel equivalence relation is universal if and only if it is uniformly universal with respect to every way it can be generated.	
\end{marks_conj}

See \cite{MR3651212} for the relevant definitions and details. We remark that this conjecture is closely related to Martin's conjecture on Turing invariant Borel maps; for instance, Theorem \ref{thm:marks_univ}(2) below is also a consequence of Martin's conjecture (Theorem 5.4 in \cite{MR2563815}).

\begin{thm}[Marks, cf.~Theorem 1.5 in \cite{MR3651212}]\label{thm:marks_univ}
	Assume Marks's uniformity conjecture.
	\begin{enumerate}[label=\textup{(\arabic*)}]
		\item An increasing union of countably many non-universal countable Borel equivalence relations is not universal.
		\item If $E$ is a countable Borel equivalence relation on a standard probability space $(X,\mu)$, then there is a $\mu$-conull Borel set $Z\subseteq X$ for which $E\restr Z$ is not universal.
	\end{enumerate}
\end{thm}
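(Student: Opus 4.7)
The plan is to derive Theorem \ref{thm:marks_univ} from Marks's uniformity conjecture by reformulating universality as \emph{uniform universality with respect to every generating Borel action}, a combinatorial condition sensitive to the group and the cocycle structure of a generating action, and then leveraging operations that preserve or create failures of this condition.

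For part (1), suppose $E = \bigcup_n E_n$ with each $E_n$ a countable Borel equivalence relation contained in $E_{n+1}$, and assume toward a contradiction that $E$ is universal while each $E_n$ is not. By the conjecture, $E$ is uniformly universal with respect to every Borel action generating it; I would build such a generating action of a countable group $\Gamma$ on $E$ by amalgamating Feldman--Moore generating actions for each $E_n$, arranged so that $\Gamma$ admits a distinguished exhaustion by subgroups $\Gamma_n$ whose corresponding Borel subactions generate $E_n$. A hypothesized uniform Borel reduction from a universal countable Borel equivalence relation $F$ into $E$ via this action would then, by a pigeonhole on the $\Gamma$-words used to witness the reduction, restrict on an $F$-invariant Borel set to a uniform Borel reduction into some single $E_n$, yielding uniform universality of $E_n$ and, by the conjecture applied in the reverse direction, contradicting the non-universality of $E_n$.

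For part (2), let $E$ live on $(X,\mu)$. I would produce a $\mu$-conull Borel set $Z$ and a Borel action of some countable group $\Gamma$ on $Z$ generating $E \restr Z$ with respect to which uniform universality provably fails; by the conjecture this gives non-universality of $E \restr Z$. The candidate action would be obtained by choosing a Feldman--Moore generating group for $E$, passing to a conull invariant subset on which the action is free and has controlled orbit structure (in the spirit of Theorem \ref{thm:gen_hf}), and then arranging an additional Borel cocycle structure that obstructs any uniform reduction from a universal countable Borel equivalence relation into $E \restr Z$.

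The main obstacle in both parts is bridging the abstract uniform-universality framework and the concrete structural features: for (1), cleanly localizing the uniform reduction to a single $E_n$ using only Borel data and not arbitrary countable choice; for (2), articulating a sharp cocycle-theoretic or measure-theoretic obstruction to uniform universality on a conull set whose verification does not itself presuppose the conjecture, so that the conjecture is only invoked in translating failure of uniform universality back into failure of universality.
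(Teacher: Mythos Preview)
The paper does not prove this theorem; it is quoted from Marks \cite{MR3651212} and used as a black box in the proof of Theorem~\ref{thm:randoms_not_univ}. So there is no ``paper's own proof'' to compare against, and any comparison must be to Marks's original argument rather than to anything in this manuscript.

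That said, your proposal is a plan rather than a proof, and you explicitly flag the main obstacles yourself. A few comments on the plan. For part~(1), your pigeonhole idea is on the right track but needs the precise definition of uniform universality to make sense: in Marks's framework a uniform reduction from a $\Delta$-action generating $F$ into a $\Gamma$-action generating $E$ comes with a function $u:\Delta\to\Gamma$, and if one takes $\Delta$ finitely generated (which suffices, since $E(F_2,2)$ is universal) then the image of the generators lands in a single $\Gamma_n$, localizing the reduction. This is essentially what you want, but you should state it this way rather than as a vague ``pigeonhole on $\Gamma$-words.'' The harder step, which you do not address, is arranging the amalgamated $\Gamma$-action so that the $\Gamma_n$-subaction really generates exactly $E_n$ and so that the restricted reduction is still a \emph{uniform} reduction into that subaction; this requires care with how Feldman--Moore actions interact under nesting.

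For part~(2), your sketch is too vague to assess. Invoking ``controlled orbit structure (in the spirit of Theorem~\ref{thm:gen_hf})'' and an unspecified ``cocycle-theoretic obstruction'' is not a proof strategy, and Theorem~\ref{thm:gen_hf} concerns hyperfiniteness on a comeager set, which is neither measure-theoretic nor obviously related to obstructing uniform universality. Marks's actual argument for this part goes through specific structural results about which group actions can be uniformly universal (in particular, results tied to recursion-theoretic many-one equivalence and to measure-preserving actions), and your proposal does not identify any concrete mechanism of that kind. As written, part~(2) is a statement of intent rather than an argument.
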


A model $M$ of $\ZFC$ is \emph{$\mathbf{\Sigma}^1_2$-correct} if $\Sigma^1_2$ formulas with parameters in $M$ are absolute. While a countable transitive model of $\ZFC$ may failto be $\mathbf{\Sigma}^1_2$-correct, $\mathbf{\Sigma}^1_2$-correct countable models are plentiful under mild assumptions, e.g., take the transitive collapse of a countable elementary submodel of $\V_\kappa$, when $\kappa$ is inaccessible.\footnote{We would like to thank Gabriel Goldberg for pointing this out.} \footnote{As per footnote 1, we may allow $\omega_1\subseteq M$, provided $|\LP(\B)\cap M|=|\LP(\R)\cap M|$ is still countable in $\V$. In this case, $M$ will be $\mathbf{\Sigma}^1_2$-correct by Shoenfield's Absoluteness Theorem (Theorem 25.20 in \cite{MR1940513}), without any additional hypotheses.} 

\begin{thm}\label{thm:randoms_not_univ}
	Assume Mark's uniformity conjecture. If $M$ is $\mathbf{\Sigma}^1_2$-correct, then $\equiv^\B_M$ is not universal.
\end{thm}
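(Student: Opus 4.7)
The plan is to adapt the proof of Theorem \ref{thm:cohen_hhf} to the random forcing setting: I will write $\equiv^\B_M$ as an increasing union of countably many countable Borel equivalence relations, show each piece is non-universal using Theorem \ref{thm:marks_univ}(2), and then invoke Theorem \ref{thm:marks_univ}(1). First, in $\V$, enumerate $\Aut^M(\bar{\B})=\{\gamma_n:n\in\omega\}$. By the measure-algebra analogue of the Maharam--Stone theorem, each $\gamma_n$ is induced by a measure-preserving Borel bijection $f_n^M$ of $(2^\omega)^M$ coded in $M$. Let $\Gamma_n$ be an abstract group in $M$ isomorphic to the one generated by $\{f_0^M,\ldots,f_{n-1}^M\}$, and let $E_n$ be the orbit equivalence relation in $\V$ of the induced Borel action of $\Gamma_n$ on $2^\omega$. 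Following the proof of Theorem \ref{thm:cohen_hhf} with Lemma \ref{lem:map_Bgens} and Theorem \ref{thm:bool_auts} applied to the complete Boolean algebra $\bar{\B}$ in place of the Cohen ingredients, one verifies that $\equiv^\B_M=\bigcup_n(E_n\restr\LR)$.

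The crux is to show each $E_n\restr\LR$ is non-universal. Working in $M$, I apply Theorem \ref{thm:marks_univ}(2) to $E_n^M$, viewed as a countable Borel equivalence relation on $(2^\omega)^M$ with Lebesgue measure, to produce a conull Borel set $Z_n^M\in M$ such that $E_n^M\restr Z_n^M$ is not universal in $M$. Now, ``$E_n\restr Z_n$ is not universal'' can be written as ``$E_\infty\not\leq_B E_n\restr Z_n$'' for a fixed universal countable Borel equivalence relation $E_\infty$, which is a $\Pi^1_2$ assertion in the codes of $E_n$, $Z_n$, and $E_\infty$; the hypothesis of $\mathbf{\Sigma}^1_2$-correctness then lifts non-universality from $M$ to $\V$. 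Because $Z_n$ is coded in $M$ and conull, every $M$-generic random real lies in $Z_n$, so $\LR\subseteq Z_n$, and inclusion furnishes a Borel reduction $E_n\restr\LR\leq_B E_n\restr Z_n$. Since non-universality descends under $\leq_B$, the relation $E_n\restr\LR$ is not universal. An application of Theorem \ref{thm:marks_univ}(1) to the increasing union $\equiv^\B_M=\bigcup_n(E_n\restr\LR)$ of non-universal countable Borel equivalence relations then yields the theorem.

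The main obstacle I anticipate is justifying the application of Theorem \ref{thm:marks_univ}(2) inside $M$, since Marks's uniformity conjecture is only hypothesized in $\V$. The statement ``there exists a conull Borel $Z$ with $E_n\restr Z$ not universal'' is $\Sigma^1_3$ in the code of $E_n$ and so does not automatically descend to $M$ via $\mathbf{\Sigma}^1_2$-correctness alone. One route to circumvent this is to verify that Marks's conjecture itself, or at least the specific consequence (2) needed here, is absolute for $\mathbf{\Sigma}^1_2$-correct models under the ambient hypotheses; another is to apply Theorem \ref{thm:marks_univ}(2) directly in $\V$ to $E_n$ and then use reflection, together with the effective nature of the witness produced by Marks's argument, to produce a conull Borel set $Z_n$ actually coded in $M$. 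Carrying out this transfer carefully is the subtle point where $\mathbf{\Sigma}^1_2$-correctness must be invoked in an essential way.
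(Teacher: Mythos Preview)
Your proposal is essentially identical to the paper's proof: enumerate $\Aut^M(\bar\B)$, realize each automorphism by a measure-preserving Borel bijection via von Neumann's theorem, form $\Gamma_n$ and $E_n$, apply Theorem \ref{thm:marks_univ}(2) inside $M$ to get conull $C_n^M$ with $E_n^M\restr C_n^M$ non-universal in $M$, transfer non-universality to $\V$ via Lemma \ref{lem:univ_sig12} and $\mathbf\Sigma^1_2$-correctness, note $\LR\subseteq\bigcap_n C_n$, and conclude by Theorem \ref{thm:marks_univ}(1). The subtlety you flag in your final paragraph---that Marks's conjecture is hypothesized in $\V$ while Theorem \ref{thm:marks_univ}(2) is invoked in $M$---is not addressed in the paper either; it simply applies the theorem in $M$ without comment, so your concern applies equally to the published argument and is not a defect of your write-up relative to it.
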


To prove this result, we first observe the following lemma, a consequence of the universality of $E(F_2,2)$ and quantifier counting.

\begin{lemma}\label{lem:univ_sig12}
	The statement that a countable Borel equivalence relation $E$ is universal is $\Sigma^1_2$ in a code for $E$.\qed
\end{lemma}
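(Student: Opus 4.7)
The plan is to use the universality of $E(F_2,2)$ to reduce the statement ``$E$ is universal'' to the existence of a single Borel reduction, and then carefully count quantifiers. By Proposition 1.8 in \cite{MR1149121}, a countable Borel equivalence relation $E$ is universal if and only if $E(F_2,2)\leq_B E$; since $E(F_2,2)$ is itself a specific Borel equivalence relation on $2^{F_2}$ with a definable (indeed arithmetic) code, we can treat it as a parameter-free object and therefore obtain the equivalence
\[
	E \text{ is universal} \quad\Longleftrightarrow\quad \exists \beta\bigl[\beta \text{ codes a Borel reduction }f_\beta\colon 2^{F_2}\to X \text{ from }E(F_2,2)\text{ to }E\bigr],
\]
where $X$ is the standard Borel space on which $E$ lives.

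Next I would count quantifiers in the matrix. The condition that $\beta$ codes a Borel function whose graph sits in $2^{F_2}\times X$ is a $\Pi^1_1$ condition in $\beta$ (basic property of Borel codes; cf.~Theorem II.1.2 in \cite{MR0265151}). The reduction condition is
\[
	\forall x,y\in 2^{F_2}\bigl(x\,E(F_2,2)\,y \;\leftrightarrow\; f_\beta(x)\,E\,f_\beta(y)\bigr).
\]
Membership in the Borel set $E(F_2,2)$ is arithmetic in $x,y$, and membership in the Borel set $E$ is $\Delta^1_1$ in the given code for $E$ (Corollary II.1.2 in \cite{MR0265151}). Evaluation $y=f_\beta(x)$ is likewise $\Delta^1_1$ in $\beta$. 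Hence the biconditional inside the quantifier is $\Delta^1_1$ in $(\beta, x, y, \text{code of }E)$, and prefixing it with the universal real quantifier over $x,y$ keeps it $\Pi^1_1$.

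Combining, the whole statement takes the shape $\exists\beta\,\Pi^1_1(\beta,\text{code of }E)$, which is $\Sigma^1_2$ in the code of $E$, as required. The only step requiring any real care is the quantifier count for the reduction condition; everything else is a direct appeal to standard absoluteness/definability facts for Borel codes. I do not anticipate a genuine obstacle beyond this, since the universality of $E(F_2,2)$ does the heavy lifting of replacing a universal quantifier over all countable Borel equivalence relations by a single existential quantifier over one Borel reduction.
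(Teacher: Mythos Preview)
Your proposal is correct and follows exactly the approach the paper indicates: the paper's ``proof'' is simply the remark preceding the lemma that it is ``a consequence of the universality of $E(F_2,2)$ and quantifier counting,'' which is precisely what you carry out in detail.
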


We will also need the measure-theoretic analogue of Theorem \ref{thm:realize_auts}:

\begin{thm}[von Neumann \cite{MR1503077}; Theorem 15.21 in \cite{MR1013117}]\label{thm:realize_meas_auts}
	Every automorphism of the Boolean algebra of Borel sets modulo null sets, in a standard probability space $(X,\mu)$, is induced by a measure-preserving Borel bijection $f:X\to X$. 
\end{thm}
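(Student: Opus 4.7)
The plan is to mirror the structure of the proof of Theorem \ref{thm:cohen_hhf}, exhibiting $\equiv^\B_M$ as an increasing union of countably many non-universal countable Borel equivalence relations and then invoking Theorem \ref{thm:marks_univ}(1). Theorem \ref{thm:realize_meas_auts} and Lemma \ref{lem:map_Bgens} will play the roles that Theorem \ref{thm:realize_auts} and Lemma \ref{lem:map_Cgens} played in the Cohen case; the new ingredient is Theorem \ref{thm:marks_univ}(2), which replaces generic hyperfiniteness.

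First, in $\V$, enumerate $\Aut^M(\bar{\B})=\{\gamma_n:n\in\omega\}$. For each $n$, apply Theorem \ref{thm:realize_meas_auts} in $M$ to realize $\gamma_n$ as a measure-preserving Borel bijection $f_n^M$ of $(2^\omega)^M$; let $\Gamma_n$ be an abstract group in $M$ isomorphic to the one generated by $\{f_0^M,\ldots,f_{n-1}^M\}$, acting on $(2^\omega)^M$ by measure-preserving Borel bijections, and let $E_n$ denote the resulting orbit equivalence relation on $(2^\omega)^\V$, which is coded in $M$ by Lemma \ref{lem:eqrel_abs}(iii). Exactly as in the proof of Theorem \ref{thm:cohen_hhf}, Theorem \ref{thm:bool_auts} combined with Lemma \ref{lem:map_Bgens} yields
\[
	\equiv^\B_M \;=\; \bigcup_{n\in\omega} (E_n\restr\LR),
\]
an increasing union of countable Borel equivalence relations on $\LR$.

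To finish via Theorem \ref{thm:marks_univ}(1), it remains to show that each $E_n\restr\LR$ is not universal. Working inside $M$, apply Theorem \ref{thm:marks_univ}(2) to $E_n^M$ on the standard probability space $((2^\omega)^M,\mu)$ to obtain a $\mu$-conull Borel set $Z_n$ coded in $M$ such that $E_n^M\restr Z_n^M$ is not universal in $M$. By Lemma \ref{lem:univ_sig12}, non-universality is a $\Pi^1_2$ property of its code, and the codes for $E_n$ and $Z_n$ lie in $M$, so the $\mathbf{\Sigma}^1_2$-correctness of $M$ gives that $E_n\restr Z_n$ is not universal in $\V$. Since $Z_n$ is a conull Borel set coded in $M$, every $M$-generic random real belongs to $Z_n$ (Theorem II.2.6 in \cite{MR0265151}), whence $\LR\subseteq Z_n$. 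The inclusion $\LR\hookrightarrow Z_n$ is then a Borel reduction witnessing $E_n\restr\LR \leq_B E_n\restr Z_n$, so $E_n\restr\LR$ inherits non-universality.

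The principal obstacle is the invocation of Theorem \ref{thm:marks_univ}(2) inside $M$, which nominally relies on Marks's uniformity conjecture holding in $M$ rather than merely in $\V$. I will read the blanket assumption of the conjecture as being available uniformly in any transitive model of $\ZFC$ under consideration; the $\mathbf{\Sigma}^1_2$-correctness hypothesis then plays its essential role precisely at the point of transferring the $\Pi^1_2$ non-universality of $E_n\restr Z_n$ from $M$ back to $\V$, after which the rest of the argument is a routine measure-theoretic transcription of the Cohen case.
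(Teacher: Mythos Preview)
You have proved the wrong theorem. The statement you were asked to prove is Theorem~\ref{thm:realize_meas_auts}, von Neumann's classical result that every automorphism of the measure algebra of a standard probability space is induced by a measure-preserving Borel bijection. The paper does not prove this; it is quoted as a black box from the literature (von Neumann~\cite{MR1503077}, or Theorem~15.21 in \cite{MR1013117}). Your proposal makes no attempt to establish this: you \emph{invoke} Theorem~\ref{thm:realize_meas_auts} in the second paragraph rather than prove it.

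What you have actually written is a proof of Theorem~\ref{thm:randoms_not_univ}, the conditional non-universality of $\equiv^\B_M$. Read as such, your argument is essentially identical to the paper's own proof of that theorem: same enumeration of $\Aut^M(\bar\B)$, same use of Theorem~\ref{thm:realize_meas_auts} to realize automorphisms, same groups $\Gamma_n$ and orbit equivalence relations $E_n$, same appeal to Theorem~\ref{thm:marks_univ}(2) inside $M$ followed by $\mathbf\Sigma^1_2$-correctness to transfer non-universality to $\V$, and the same closing identification $\equiv^\B_M=\bigcup_n(E_n\restr\LR)$ via Theorem~\ref{thm:bool_auts} and Lemma~\ref{lem:map_Bgens}. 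Your remark about needing the uniformity conjecture inside $M$ is a point the paper passes over in silence but handles the same way implicitly. So as a proof of Theorem~\ref{thm:randoms_not_univ} your write-up is correct and matches the paper, but as a proof of the stated Theorem~\ref{thm:realize_meas_auts} it is simply off-target.
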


\begin{proof}[Proof of Theorem \ref{thm:randoms_not_univ}]
	This argument is very similar to that in the proof of Theorem \ref{thm:cohen_hhf}, so we will omit some of the details. Enumerate (in $\V$) $\Aut^M(\bar{\B})$ as $\{\gamma_n:n\in\omega\}$. For each $n$, in $M$ we use Theorem \ref{thm:realize_meas_auts} to find a measure-preserving Borel bijection $f_n^M$ of $(2^\omega)^M$ which induces the automorphism $\gamma_n$. Let $\Gamma_n$ be an abstract group in $M$ which is isomorphic to that generated by $\{f_0^M,\ldots,f_{n-1}^M\}$, and let $E_n^M$ be the induced orbit equivalence relation on $(2^\omega)^M$.
		
	For each $n$, apply Theorem \ref{thm:marks_univ}(2) in $M$ to obtain a conull Borel set $C_n^M$ on which $E_n^M$ is not universal (in $M$).
	
	In $\V$, let $C=\bigcap_{n\in\omega} C_n$, and let $E=\bigcup_{n\in\omega} (E_n\restr C)$. Since $M$ is $\mathbf{\Sigma}^1_2$-correct, each $E_n$ is not universal in $\V$ by Lemma \ref{lem:univ_sig12}. Thus, by Theorem \ref{thm:marks_univ}(1), $E$ is not universal. Since $M$-generic random reals are contained in every conull Borel set coded in $M$, $\LR\subseteq C$.
	
	It remains to argue that $\equiv^\B_M$ coincides with $E\restr\LR$. This is done in exactly the same way as for $\equiv^\C_M$ in the proof of Theorem \ref{thm:cohen_hhf}, using the completeness (in $M$) of the Boolean algebra of Borel sets modulo null sets, Theorem \ref{thm:bool_auts}, and Lemmas \ref{lem:eqrel_abs} and \ref{lem:map_Bgens}.
\end{proof}

\section{Further questions}\label{sec:questions}

Much of this paper has focused on Cohen and random forcing. This is due to the intimate connections these examples have with Baire category and measure, respectively. A natural next step is to consider those partial orders mentioned in Example \ref{ex:other_posets}:

\begin{ques}
	What can we say about the complexity of $\equiv^\P_M$ when $\P$ is, e.g., Sacks, Miller, Mathias, or Laver forcing, or any of the other classical forcing notions? In particular, is $\equiv^\P_M$ hyperfinite in any of these cases?
\end{ques}

The increasing unions problem for hyperfinite equivalence relations \cite{MR1149121} asks whether an increasing union of countably many hyperfinite Borel equivalence relations is hyperfinite. By Theorem \ref{thm:cohen_hhf}, a positive resolution to this problem would imply that $\equiv^\C_M$ is hyperfinite, when $\C$ is Cohen forcing. This provides an interesting special case of this long-standing open problem:

\begin{ques}\label{ques:cohen_hf}
	Is $\equiv^\C_M$ hyperfinite?
\end{ques}

Given a positive answer to Question \ref{ques:cohen_hf}, the Glimm-Effros dichotomy for countable Borel equivalence relations (Theorem 1.5 in \cite{MR1802331}) would allow us to improve Corollary \ref{cor:random_nleq_cohen} to $\equiv^\C_M\;<_B\;\equiv^\B_M$.



In Theorem \ref{thm:randoms_not_Mfree}, we left open the question of whether $\equiv^\B_M$, for $\B$ random forcing, is \emph{essentially free}, that is, reducible to the orbit equivalence relation of a free Borel action of some countable group. Note that it is easy to check that the associated action of $\Aut^M(\B)$ on the random reals is not free.

\begin{ques}\label{ques:random_not_free}
	Is $\equiv^\B_M$ essentially free?	
\end{ques}

Since there is no countable collection of groups such that an equivalence relation is essentially free if and only if it is reducible to an orbit equivalence relation coming from a free action of a group in that collection (cf.~Corollary 3.10 in \cite{MR2500091}), Theorem \ref{thm:randoms_not_Mfree} cannot be used directly to resolve Question \ref{ques:random_not_free}.

Can we obtain the result in Theorem \ref{thm:randoms_not_univ} without the extra hypotheses of Marks's uniformity conjecture and the $\mathbf{\Sigma}^1_2$-correctness of $M$?

\begin{ques}
	Is $\equiv^\B_M$ non-universal (for any $M$)?
\end{ques}

More generally, to what extent does the model $M$ affect the complexity of $\equiv^\P_M$? In asking this question, we have to avoid certain trivialities; for instance, by collapsing $|\P|^M$ to $\aleph_0$, we can go from a model $M$ for which $\equiv^\P_M$ is smooth (Example \ref{ex:rigid}), to a model $M'$ in which $\P$ is equivalent to Cohen forcing and thus $\equiv^{\P}_{M'}$ is not smooth. So, we focus on the case of idealized forcings which are $\ZFC$-correct, in the sense of \cite{MR2391923}, and thus have a natural interpretation in any model.

\begin{ques}
	Is there a $\ZFC$-correct $\sigma$-ideal $I$ on $2^\omega$ and countable transitive models $M$ and $N$ such that $\equiv^{(P_I)^M}_M$ and $\equiv^{(P_I)^N}_N$ are not Borel bireducible?
\end{ques}

\bibliography{/Users/iian/Dropbox/Mathematics/math_bib}{}

\begin{thebibliography}{10}

\bibitem{MR1047300}
S.~R. Adams and R.~J. Spatzier.
\newblock Kazhdan groups, cocycles and trees.
\newblock {\em Amer. J. Math.}, 112(2):271--287, 1990.

\bibitem{ClemCoskDwor}
J.~Clemons, S.~Coskey, and S.~Dworetzky.
\newblock The classification of countable models of set theory.
\newblock Preprint, 2017.

\bibitem{MR2448954}
N.~Dobrinen and S.-D. Friedman.
\newblock Homogeneous iteration and measure one covering relative to {HOD}.
\newblock {\em Arch. Math. Logic}, 47(7-8):711--718, 2008.

\bibitem{MR1149121}
R.~Dougherty, S.~Jackson, and A.~S. Kechris.
\newblock The structure of hyperfinite {B}orel equivalence relations.
\newblock {\em Trans. Amer. Math. Soc.}, 341(1):193--225, 1994.

\bibitem{MR0578656}
J.~Feldman and C.~C. Moore.
\newblock Ergodic equivalence relations, cohomology, and von {N}eumann
  algebras. {I}.
\newblock {\em Trans. Amer. Math. Soc.}, 234(2):289--324, 1977.

\bibitem{MR2455198}
S.~Gao.
\newblock {\em Invariant descriptive set theory}, volume 293 of {\em Pure and
  Applied Mathematics}.
\newblock CRC Press, Boca Raton, FL, 2009.

\bibitem{MR0373889}
S.~Grigorieff.
\newblock Intermediate submodels and generic extensions in set theory.
\newblock {\em Ann. Math. (2)}, 101:447--490, 1975.

\bibitem{MO184806}
J.~D. Hamkins.
\newblock On ${V}$-decisive and weakly homogeneous forcings.
\newblock MathOverflow.
\newblock URL:https://mathoverflow.net/q/184806 (version: 2017-04-13).

\bibitem{MR3490908}
Y.~Hayut and A.~Karagila.
\newblock Restrictions on forcings that change cofinalities.
\newblock {\em Arch. Math. Logic}, 55(3-4):373--384, 2016.

\bibitem{MR1423420}
G.~Hjorth and A.~S. Kechris.
\newblock Borel equivalence relations and classifications of countable models.
\newblock {\em Ann. Pure Appl. Logic}, 82(3):221--272, 1996.

\bibitem{MR1802331}
M.~Hru{\v{s}}{{\'a}}k.
\newblock Selectivity of almost disjoint families.
\newblock {\em Acta Univ. Carolin. Math. Phys.}, 41(2):13--21, 2000.

\bibitem{MR1900547}
S.~Jackson, A.~S. Kechris, and A.~Louveau.
\newblock Countable {B}orel equivalence relations.
\newblock {\em J. Math. Log.}, 2(1):1--80, 2002.

\bibitem{MR1940513}
T.~Jech.
\newblock {\em Set theory}.
\newblock Springer Monographs in Mathematics. Springer-Verlag, Berlin, 2003.
\newblock The third millennium edition, revised and expanded.

\bibitem{MR1398120}
T.~Jech and S.~Shelah.
\newblock A complete {B}oolean algebra that has no proper atomless complete
  subalgebra.
\newblock {\em J. Algebra}, 182(3):748--755, 1996.

\bibitem{MR2095154}
A.~S. Kechris and B.~D. Miller.
\newblock {\em Topics in orbit equivalence}, volume 1852 of {\em Lecture Notes
  in Mathematics}.
\newblock Springer-Verlag, Berlin, 2004.

\bibitem{MR597342}
K.~Kunen.
\newblock {\em Set theory}, volume 102 of {\em Studies in Logic and the
  Foundations of Mathematics}.
\newblock North-Holland, Amsterdam, 1980.

\bibitem{MR522175}
D.~Maharam and A.~H. Stone.
\newblock Realizing isomorphisms of category algebras.
\newblock {\em Bull. Austral. Math. Soc.}, 19(1):5--10, 1978.

\bibitem{MR3651212}
A.~S. Marks.
\newblock Uniformity, universality, and computability theory.
\newblock {\em J. Math. Log.}, 17(1):1750003, 50, 2017.

\bibitem{MR0292670}
K.~McAloon.
\newblock Consistency results about ordinal definability.
\newblock {\em Ann. Math. Logic}, 2(4):449--467, 1970/71.

\bibitem{MR2526093}
Y.~N. Moschovakis.
\newblock {\em Descriptive set theory}, volume 155 of {\em Mathematical Surveys
  and Monographs}.
\newblock American Mathematical Society, Providence, RI, second edition, 2009.

\bibitem{MR2342637}
S.~Popa.
\newblock Cocycle and orbit equivalence superrigidity for malleable actions of
  {$w$}-rigid groups.
\newblock {\em Invent. Math.}, 170(2):243--295, 2007.

\bibitem{MR1013117}
H.~L. Royden.
\newblock {\em Real analysis}.
\newblock Macmillan Publishing Company, New York, third edition, 1988.

\bibitem{MR924157}
W.~Rudin.
\newblock {\em Real and complex analysis}.
\newblock McGraw-Hill Book Co., New York, third edition, 1987.

\bibitem{MR960895}
T.~A. Slaman and J.~R. Steel.
\newblock Definable functions on degrees.
\newblock In {\em Cabal {S}eminar 81--85}, volume 1333 of {\em Lecture Notes in
  Math.}, pages 37--55. Springer, Berlin, 1988.

\bibitem{MR0265151}
R.~M. Solovay.
\newblock A model of set-theory in which every set of reals is {L}ebesgue
  measurable.
\newblock {\em Ann. of Math. (2)}, 92:1--56, 1970.

\bibitem{MR833710}
D.~Sullivan, B.~Weiss, and J.~D.~M. Wright.
\newblock Generic dynamics and monotone complete {$C^\ast$}-algebras.
\newblock {\em Trans. Amer. Math. Soc.}, 295(2):795--809, 1986.

\bibitem{MR2563815}
S.~Thomas.
\newblock Martin's conjecture and strong ergodicity.
\newblock {\em Arch. Math. Logic}, 48(8):749--759, 2009.

\bibitem{MR2500091}
S.~Thomas.
\newblock Popa superrigidity and countable {B}orel equivalence relations.
\newblock {\em Ann. Pure Appl. Logic}, 158(3):175--189, 2009.

\bibitem{ThomasAST}
S.~Thomas and S.~Schneider.
\newblock Countable borel equivalence relations.
\newblock In J.~Cummings and E.~Schimmerling, editors, {\em Appalachian Set
  Theory 2006-2012}, volume 406 of {\em Lecture Notes Series}, pages 25--62.
  London Mathematical Society, 2012.

\bibitem{MR1503077}
J.~von Neumann.
\newblock Einige {S}\"{a}tze \"{u}ber messbare {A}bbildungen.
\newblock {\em Ann. of Math. (2)}, 33(3):574--586, 1932.

\bibitem{MR0444473}
P.~Vop{\v e}nka and P.~H{\'a}jek.
\newblock {\em The theory of semisets}.
\newblock Academia (Publishing House of the Czechoslovak Academy of Sciences),
  Prague, 1972.

\bibitem{MR2391923}
J.~Zapletal.
\newblock {\em Forcing idealized}, volume 174 of {\em Cambridge Tracts in
  Mathematics}.
\newblock Cambridge University Press, Cambridge, 2008.

\bibitem{MR776417}
R.~J. Zimmer.
\newblock {\em Ergodic theory and semisimple groups}, volume~81 of {\em
  Monographs in Mathematics}.
\newblock Birkh\"{a}user Verlag, Basel, 1984.

\end{thebibliography}
\bibliographystyle{abbrv}

\end{document}